\newcommand{\R}{\mathbb{R}}
\newcommand{\E}{\mathbb{E}}
\newtheorem{thm}{Theorem}[subsection]
\renewcommand{\thethm}{%
	\ifnum\value{subsection}>0
	\thesubsection
	\else
	\thesection
	\fi
	.\arabic{thm}%
}
\newtheorem{lemma}[thm]{Lemma}
\newtheorem{cor}[thm]{Corollary}
\newtheorem{prop}[thm]{Proposition}
\newtheorem{conj}[thm]{Conjecture}
\theoremstyle{definition}
\newtheorem{Def}[thm]{Definition}
\newtheorem{rk}[thm]{Remark}
\title{Left-Invariant Riemannian Distances on Higher-Rank Sol-Type Groups}
\author{Daniel N. Levitin}
\date{\today}
\begin{document}
	
	\maketitle
	
	\begin{abstract}
		In this paper, we generalize the results of Le Donne, Pallier, and Xie (\textit{Groups, Geom. Dyn.}, \textbf{19} (2025), 227--263) to describe the split left-invariant Riemannian distances on higher-rank Sol-type groups $G=\mathbf{N}\rtimes \R^k$. We show that the rough isometry type of such a distance is determined by a specific restriction of the metric to $\R^k$, and therefore the space of rough similarity types of distances is parameterized by the symmetric space $SL_k(\R)/SO_k(\R)$. In order to prove this result, we describe a family of uniformly roughly geodesic paths, which arise by way of the new technique of \textit{Euclidean curve surgery}.
	\end{abstract}
	
	\section{Introduction}
	
	Describing the geometry of solvable Lie groups and their lattices, the discrete virtually polycyclic groups, is one of the major areas of work in geometric group theory. Solvable Lie groups admit an algebraic classification due to a theorem of Auslander \cite{Auslander1}, which states that every simply-connected solvable Lie group $G$ fits into a short exact sequence 
	\[ 1\to N_1\to G\to N_2\to 1,\]
	where $N_1$ and $N_2$ are simply-connected nilpotent Lie groups. See \cite{EskinFisher} for a detailed description of what is known and supposed about these groups. Much of the work of describing the geometry of solvable Lie groups has focused on cases where the defining short exact sequence splits, and where the group $N_2$ is abelian. In this paper, we focus on \textit{higher-rank Sol-type groups} $\prod_{i=1}^n \mathbf{N}_i \rtimes \R^k$, where the groups $\mathbf{N}_i$ are simply-connected nilpotent, $k$ is at least $2$, and the group $\R^k$ acts on each space $\mathbf{N}_i$ by contraction. See Definitions \ref{Def:Derivation} and \ref{Def:SolTypeGp} for a complete description.
	
	One set of coarse geometric questions concerns the structure of various groups of self-mappings. A map $f:(X, d_X)\to (Y, d_Y)$ between metric spaces is a \textit{quasi-isometry} if there are positive constants $A$ and $B$, and non-negative $C$ so that, for all $x_1$ and $x_2$ in $X$,
	\[ A d_X(x_1, x_2)-C\le d_Y(f(x_1), f(x_2))\le B d_X(x_1, x_2)+C,\]
	
	\noindent and if, in addition, each $y$ in $Y$ is within $C$ of a point in $f(X)$. If $A=B$, then such a map is a \textit{rough similarity}. If $A=B=1$, then such a map is a \textit{rough isometry}. Finally, if $C=0$, then such a map is a \textit{bi-Lipschitz equivalence}.
	
	If $(X, d_X)=(Y, d_Y)$, then the collection of maps of these types, modulo the equivalence $f\sim g$ if $\sup_{x \in X} d_Y(f(x), g(x))<\infty$, are groups under composition, denoted respectively $QI(X, d_X)$, $RS(X, d_X)$, $RI(X, d_X)$, and $BiLip(X, d_X)$. One asks, for a given solvable Lie group $X$ and left-invariant Riemannian distance $d_X$, if any structure is preserved by these groups, and if so, whether it is sufficient to determine the group. For instance, the work of Eskin--Fisher--Whyte, and later of Peng, provides tools to describe specific metrics on nondegenerate Sol-type groups \cite{EFW1, EFW2, Peng1, Peng2}, and to compute the groups of quasi-isometries.
	
	One may also ask finer geometric questions. For instance, we often wish to know what the efficient paths with respect to a specific left-invariant Riemannian metric may look like, and how they may differ from efficient paths with respect to some other metric. In a similar vein, while any pair of left-invariant Riemannian distances will always be bi-Lipschitz, they need not be roughly similar or roughly isometric. One fruitful direction of research has been to investigate, of a given space and class of metrics, how much information is required to compute the rough similarity or rough isometry class of a metric, and what space these metrics parameterize (see, e.g., \cite{BaderFurman, Coornaert, Furman, OregonReyes}).
	
	For \textit{Sol-type groups}, which are defined like their higher-rank counterparts, but with $k=1$, Le Donne--Pallier--Xie have shown that all left-invariant Riemannian distances are roughly similar, and the rough isometry class of a distance is determined by its induced metric on the $\R$-factor \cite{LDPX}. In this paper, we achieve an equivalent result for the subset of \textit{perpendicularly split} left-invariant Riemannian metrics. These are the metrics $g$ for which the $\R^k$ factor can be taken to be $g$ perpendicular to the $\prod_{i=1}^n \mathbf{N}_i$.
	
	\begin{thm} \label{IntroRIThm}
		
		Let $G=\prod_{i=1}^n \mathbf{N}_i\rtimes \R^k$ be a higher-rank Sol-type group. Let $g_1$ and $g_2$ be perpendicularly split left-invariant Riemannian metrics, and let their induced metrics on the $\R^k$ factors be equal (resp. similar). Then the related left-invariant Riemannian distances $d_1$ and $d_2$ are roughly isometric (resp. roughly similar).
		
	\end{thm}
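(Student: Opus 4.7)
My plan is to reduce the rough isometry statement to a comparison of carefully chosen paths between arbitrary points. First I would handle the rough similarity case by a global rescaling of $g_2$ by the common similarity ratio; this preserves the perpendicular splitting and reduces the statement to the rough isometry case, where the induced metrics on $\R^k$ literally coincide.

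The core of the argument would be a formula expressing $d_i(p,q)$, up to an additive constant independent of $p$ and $q$, as a quantity depending only on (a) the Euclidean displacement $v = \pi(q) - \pi(p) \in \R^k$, where $\pi: G \to \R^k$ is the projection, and (b) the nilpotent correction $n \in \prod_i \mathbf{N}_i$ needed to reach $q$ after the purely Euclidean translation by $v$. Because $\R^k$ acts on each $\mathbf{N}_i$ by contraction along some open half-space, a path that detours sufficiently deep into the appropriate region of $\R^k$ can execute any required nilpotent correction at cost decaying exponentially in the depth. For the perpendicularly split $g_i$, the infinitesimal length decomposes cleanly into an $\R^k$-part and an $\mathfrak{n}$-part, so the $g_i$-length of such a path is the $g_i|_{\R^k}$-length of its $\R^k$-projection plus a vanishing contribution from the nilpotent directions once the depth is chosen correctly. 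Consequently $d_i(p,q)$ is governed up to bounded additive error by $g_i|_{\R^k}$, and the hypothesis $g_1|_{\R^k} = g_2|_{\R^k}$ yields the conclusion.

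To make this rigorous I would use the Euclidean curve surgery advertised in the abstract. Given $p, q$, I would first produce a candidate path $\gamma$ whose $\R^k$-projection visits a prescribed depth in each contracting half-space needed, inserts the nilpotent correction there, and returns to end at $\pi(q)$. The surgery would then modify the shape of this Euclidean curve, without changing its endpoints and without significantly changing its length, into a standardized form, thereby giving an upper bound on $d_i(p,q)$ depending only on $v$, $n$, and $g_i|_{\R^k}$. A matching lower bound would come from the standard projection argument: any path from $p$ to $q$ projects to a curve in $\R^k$ whose $g_i|_{\R^k}$-length must be long enough to permit the given nilpotent correction to be absorbed at suitable depths. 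Together these bounds produce a family of uniformly roughly geodesic paths, parameterized by $(v, n)$, whose $g_1$- and $g_2$-lengths differ only by a uniformly bounded constant.

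The main obstacle, and almost certainly the technical heart of the paper, is carrying out the curve surgery uniformly when the factors $\mathbf{N}_i$ have distinct, possibly transverse, contracting half-spaces in $\R^k$, so that the standardized Euclidean curve must simultaneously reach a prescribed depth in several directions while incurring only bounded extra length. Establishing this uniformity across all pairs $(p,q)$, so that the additive error in the distance formula is independent of both $v$ and $n$, is the step I would expect to consume most of the effort; once that uniform family of rough geodesics is in hand, the rough isometry of $d_1$ and $d_2$ follows essentially by inspection.
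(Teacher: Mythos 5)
Your overall strategy---compare distances through half-space visiting ``detour'' paths whose cost is governed by the induced metric on $\R^k$, and use Euclidean curve surgery to control lengths---is the same as the paper's. But there is a genuine gap in where you place the difficulty. The upper bound $d_i(p,q)\le Q(v,n)+C$ (with $Q$ the length of your standardized detour path) needs no surgery at all: you simply exhibit the path. The hard direction is the matching lower bound, i.e.\ showing that the cheapest half-space visiting box path is at most a bounded additive constant longer than an actual geodesic, and this is precisely what you wave off as ``the standard projection argument.'' The naive projection bound only gives the $g_i|_{\R^k}$-distance between $\pi(p)$ and $\pi(q)$; it does not force a competitor path to pay for visiting the half-spaces, since such a path can instead pay in the nilpotent directions, and in higher rank the rank-one argument of Le Donne--Pallier--Xie (that geodesics come within bounded distance of each half-space) fails. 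The paper's entire surgery machinery (Sections \ref{EuclideanSurgery}--\ref{NonDiagonalizableDerivations}) is aimed at exactly this step, applied to the $\R^k$-projection of the \emph{geodesic}, not to the candidate path: exponential contraction (Lemma \ref{PathsThatAvoidHoroballs}, Corollary \ref{SquareRootScalingSpeed}) shows that a geodesic staying depth $r$ below the $i$-th half-space must carry exponentially much movement in the $\mathfrak{n}_i$-directions, which (Lemma \ref{ObtainingLongPerpendicularLength}) forces a long stretch of $\R^k$-length perpendicular to $\nabla\alpha_i$, which in turn (Lemma \ref{LongPerpendicularDistanceGivesSurgeryFamily}, Corollary \ref{PathSurgeryFamilyToReachSpecificHeight}) provides room for a path surgery of cost at most $1$ reaching the half-space; the combinatorial lemmas on simultaneous surgeries (Lemma \ref{SimultaneousPathSurgeries}, Corollaries \ref{SimultaneousSurgeries}, \ref{SimultaneousSurgeriesDisjointOptions}) are what let you do this for all $n$ factors at once without the modifications colliding. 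Without an argument of this kind your lower bound is unsupported, and it is the heart of the theorem (Theorem \ref{BoxGeodesicsAreRoughGeodesics}).

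A secondary omission: you treat ``the $\R^k$ factor'' as a single canonical subgroup, but the perpendicular complements $s_1(\R^k)$ and $s_2(\R^k)$ for $g_1$ and $g_2$ are in general different sections of $q:G\to\R^k$, and the nilpotent correction $n$ in your formula depends on the section. To transport a $g_1$-efficient box path (running along cosets of $s_1(\R^k)$) into a $g_2$-comparable path you need that cosets of the two splittings are uniformly boundedly close after a fixed translation; this is the content of Section \ref{sec:GeometryOfSplittings} (Corollary \ref{CloseLiftCosets}) and should be stated, though it is a much smaller issue than the missing lower-bound argument. Your reduction of the rough-similarity case to the rough-isometry case by rescaling is fine and matches how Theorem \ref{IntroRIThm} follows from Theorem \ref{EigenvaluesAndDistancesIntro}.
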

	
	In rank-1, the perpendicular splitting assumption, which is used throughout \cite{LDPX}, is immediate from the group theory. However, in higher rank, only a positive-codimension set of left-invariant Riemannian metrics are perpendicularly split.
	
	Theorem \ref{IntroRIThm} arises as a special case of a stronger result, which is our main theorem.
	
	\begin{thm} {[Main Theorem]} \label{EigenvaluesAndDistancesIntro}
		
		Let $G=\prod_{i=1}^n \mathbf{N}_i\rtimes \R^k$ be a higher-rank Sol-type group. Let $g_1$ and $g_2$ be metrics for which $G$ splits perpendicularly, and let their induced metrics on the $\R^k$ factors differ with top and bottom eigenvalues $\lambda_1$ and $\lambda_k$ respectively. Let $g_1$ and $g_2$ give rise to distances $d_1$ and $d_2$. Then there is a constant $C>0$ so that
		\[ \lambda_kd_1 -C\le d_2\le \lambda_1 d_1+C.\]
	\end{thm}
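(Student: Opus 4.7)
The strategy is to reduce the comparison of $d_1$ and $d_2$ to the comparison of the induced inner products $Q_1$ and $Q_2$ on the $\R^k$ factor, by producing a rich enough family of paths whose length, up to a uniform additive error, is computed from their projection to $\R^k$. Granting such a family, both inequalities in the theorem will fall out by applying the eigenvalue estimate $\lVert v\rVert_{Q_2}\le \lambda_1\lVert v\rVert_{Q_1}\le (\lambda_1/\lambda_k)\lVert v\rVert_{Q_2}$ pointwise along the curves.

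First, using the Euclidean curve surgery technique advertised in the abstract, I construct for every pair of points $(x,y)\in G\times G$ a path $\gamma_{x,y}$ from $x$ to $y$ which is a rough $d_1$-geodesic with uniform additive constant and which satisfies
$$\ell_{g_i}(\gamma_{x,y}) = \ell_{Q_i}(\pi\circ\gamma_{x,y}) + O(1),\qquad i=1,2,$$
where $\pi\colon G\to\R^k$ is the projection and $\ell_{Q_i}$ is computed with respect to the induced inner product. The heuristic is that each nilpotent factor $\mathbf{N}_i$ contracts exponentially as one moves into an appropriate open cone of $\R^k$, so fiber displacements can be discharged ``upstairs'' at bounded cost independent of which split left-invariant Riemannian metric we use; the perpendicular-splitting hypothesis then guarantees that the remaining length is exactly the $Q_i$-length of the projected $\R^k$-curve. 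The surgery welds together cone-going excursions that neutralize the several nilpotent displacements while keeping the net $\R^k$-projection close to a straight segment in any of the reference inner products.

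Granting this family, the upper bound follows by using $\gamma_{x,y}$ as a competitor for $d_2$:
$$d_2(x,y)\le \ell_{g_2}(\gamma_{x,y}) = \ell_{Q_2}(\pi\gamma_{x,y}) + O(1) \le \lambda_1\,\ell_{Q_1}(\pi\gamma_{x,y}) + O(1) = \lambda_1\, d_1(x,y) + O(1),$$
where the last equality uses that $\gamma_{x,y}$ is a rough $d_1$-geodesic whose $g_1$-length is within $O(1)$ of the $Q_1$-length of its projection. The lower bound follows by exchanging the roles of $g_1$ and $g_2$: the identity map $(\R^k,Q_2)\to(\R^k,Q_1)$ has top eigenvalue $1/\lambda_k$, so the same argument gives $d_1\le (1/\lambda_k) d_2 + O(1)$, which rearranges to $\lambda_k d_1 \le d_2 + O(1)$ after absorbing the constant.

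The principal obstacle is the construction of the family $\gamma_{x,y}$. In rank one, the efficient path structure in Sol is classical: one travels up into the expanding factor, slides across, and comes back down. With $k\ge 2$ one must simultaneously discharge fiber displacement in several blocks $\mathbf{N}_i$ whose contracting half-spaces in $\R^k$ may be distinct, so a single straight $\R^k$-segment need not provide enough contraction in every direction. The surgery procedure must therefore splice together subsegments aimed into different Weyl-chamber-like regions, giving each $\mathbf{N}_i$ its required contracting time, while keeping the concatenation of projected segments within bounded $Q_i$-distance of a geodesic segment in the \emph{same} inner product for both $i=1,2$. Ensuring that the additive error in $\ell_{g_i}(\gamma_{x,y})-\ell_{Q_i}(\pi\gamma_{x,y})$ and the defect in rough $d_1$-geodesicity are both uniform in $(x,y)$, and that the constants depend only on the underlying derivation data rather than on the chosen metric, is the analytic heart of the argument.
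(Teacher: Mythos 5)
Your overall plan---reduce the comparison of $d_1$ and $d_2$ to a comparison of $\R^k$-lengths of projections along a family of uniformly rough $d_1$-geodesics built by curve surgery---is in the same spirit as the paper, but there is a genuine gap in the reduction step. You ask for a single family $\gamma_{x,y}$ with $\ell_{g_i}(\gamma_{x,y})=\ell_{Q_i}(\pi\circ\gamma_{x,y})+O(1)$ for \emph{both} $i=1,2$, and this is impossible in general. Left-translate the velocity to the identity and write it as $v=(s_1)_*w+h_1=(s_2)_*w+h_2$ with $dq(v)=w$ and $h_1,h_2\in\mathfrak{n}$; perpendicular splitting gives $\|v\|_{g_i}=\sqrt{\|w\|_{Q_i}^2+\|h_i\|_{g_i}^2}$, while $h_1-h_2=\bigl((s_2)_*-(s_1)_*\bigr)w$. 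Hence, unless the two perpendicular lifts have the same tangent space, every horizontal direction $w$ outside the kernel of $(s_1)_*-(s_2)_*$ forces at least one of $\|h_1\|,\|h_2\|$ to be bounded below per unit horizontal speed, so the excess $\ell_{g_i}(\gamma)-\ell_{Q_i}(\pi\gamma)$ grows \emph{linearly} in length for at least one of the two metrics, not $O(1)$. Concretely, a segment tangent to a coset of $s_1(\R^k)$ has constant $g_2$-speed $\|(s_1)_*w\|_{g_2}>\|w\|_{Q_2}$ even though it fellow-travels an $s_2$-coset; using such a path for both metrics only yields a multiplicative constant equal to the bi-Lipschitz constant of $g_2$ versus $g_1$ on $s_1(\R^k)$-directions, which is strictly larger than $\lambda_1$ whenever the splittings differ, so the sharp eigenvalue bound is lost.

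The paper avoids this by not using the same path for both metrics. In Proposition \ref{EigenvalueDistanceUpperBoundProp} it takes a $g_1$-box geodesic, whose horizontal segments lie in cosets of $s_1(\R^k)$, and replaces each horizontal segment by the lift of the same $\R^k$-path into a nearby coset of $s_2(\R^k)$; the resulting junction errors are uniformly bounded by Corollary \ref{CloseLiftCosets}, which rests on the Lie-algebraic rigidity of splittings (Section \ref{sec:LieAlgebraicProperties}) and the negative curvature of the Heintze quotients $\mathbf{H}_i$ (Section \ref{sec:GeometryOfSplittings}) to show that any two lifts, after translation by a suitable element of $\mathbf{N}$, are at finite Hausdorff distance. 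This lift-replacement is the ingredient your proposal is missing, and without it the theorem's constants cannot be recovered. The remaining (and hardest) input, that the box-geodesic distance satisfies $|d_g-\rho_g|\le C$ (Theorem \ref{BoxGeodesicsAreRoughGeodesics}, via the surgeries of Sections \ref{EuclideanSurgery}--\ref{NonDiagonalizableDerivations}), is exactly the part you defer to heuristics; even granting it in full, your argument still needs the $s_1$-to-$s_2$ re-routing step to obtain $d_2\le\lambda_1 d_1+C$ and, by symmetry, $\lambda_k d_1-C\le d_2$.
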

	
	The collection of metrics up to rough similarity on $G$ is therefore parameterized by the space of metrics up to rough similarity on $\R^k$. We will describe in Definition \ref{Def:SpaceOfMetrics} how to geometrize this space. With respect to this metric, we obtain the following theorem.
	
	\begin{thm} \label{SpaceOfMetricStructures}
		
		The space of left-invariant Riemannian distances that split $G$ perpendicularly, modulo rough similarity, $\mathscr{D}_{Riem, \perp}$ is isometric to $SL_k(\R)/SO_k(\R)$.
		
	\end{thm}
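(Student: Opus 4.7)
The plan is to translate the problem into a statement about inner products on $\R^k$ modulo positive scaling, and then to recognize the resulting space as $SL_k(\R)/SO_k(\R)$ equipped with its chosen invariant metric.

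First I would use Theorem \ref{EigenvaluesAndDistancesIntro} to show that the assignment $g \mapsto g|_{\R^k}$ descends to a bijection between $\mathscr{D}_{Riem, \perp}$ and the set of inner products on $\R^k$ modulo positive scalar multiplication. One direction is immediate: if $g_1|_{\R^k}$ and $g_2|_{\R^k}$ are positive scalar multiples of each other, then $\lambda_1 = \lambda_k$ in Theorem \ref{EigenvaluesAndDistancesIntro}, so $d_1$ and $d_2$ are roughly similar. Conversely, if the restrictions are not proportional then their spectrum of generalized eigenvalues contains at least two distinct values, and by restricting $d_1$ and $d_2$ to suitable $1$-parameter subgroups of $\R^k \hookrightarrow G$---on which both distances agree with the Euclidean distances induced by the corresponding inner products---one sees that no single multiplicative constant $\lambda$ can force $|d_2 - \lambda d_1|$ to be bounded.

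Next, the space of inner products on $\R^k$ is the symmetric space $GL_k(\R)/O_k(\R)$ of positive-definite symmetric $k \times k$ matrices, and quotienting by the $\R^+$-action of positive rescaling yields $SL_k(\R)/SO_k(\R)$ via the normalization $A \mapsto A/(\det A)^{1/k}$. Combined with the previous step, this gives a set-theoretic bijection $\mathscr{D}_{Riem, \perp} \leftrightarrow SL_k(\R)/SO_k(\R)$.

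The main obstacle is verifying that this bijection is in fact an isometry with respect to the metric of Definition \ref{Def:SpaceOfMetrics}. Theorem \ref{EigenvaluesAndDistancesIntro} supplies the upper bound: the discrepancy between rough similarity classes $[d_1]$ and $[d_2]$ is controlled above by the top and bottom generalized eigenvalues of $g_2|_{\R^k}$ relative to $g_1|_{\R^k}$, and more generally by the full eigenvalue spectrum when one considers intermediate flat subspaces of $\R^k$. A matching lower bound must be extracted by again restricting to the $1$-parameter subgroups of $\R^k$ corresponding to eigenvectors of the generalized eigenvalue problem, so that the multiplicative constants in any rough similarity between $d_1$ and $d_2$ cannot improve beyond those dictated by the inner product spectrum. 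The final computation identifying the resulting quantity with the standard $SL_k(\R)$-invariant metric on $SL_k(\R)/SO_k(\R)$ would proceed by simultaneously diagonalizing the pair of inner products and unpacking Definition \ref{Def:SpaceOfMetrics}; the nontrivial content lies in ensuring that the intrinsic metric prescribed there, which a priori involves the quasi-isometry constants between $d_1$ and $d_2$ at large scales, is realized precisely by the symmetric space formula rather than some strictly larger or smaller quantity.
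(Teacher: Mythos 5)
Your proposal follows essentially the same route as the paper: there, Theorem \ref{SpaceOfMetricStructures} is obtained by combining the Main Theorem (i.e.\ Proposition \ref{EigenvalueDistanceUpperBoundProp} together with Theorem \ref{BoxGeodesicsAreRoughGeodesics}) with the observation recorded after Definition \ref{Def:SpaceOfMetrics} that $\mathscr{D}_{Riem}(\R^k)$ with the metric $\Delta$ is $SL_k(\R)/SO_k(\R)$, and your map $g\mapsto s_g^*g$ with matching upper and lower eigenvalue bounds is exactly that argument made explicit.

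Two cautions on the details you defer. First, in the injectivity/lower-bound step the two distances do not literally restrict to the two Euclidean metrics on a common copy of $\R^k$: the perpendicular flats $s_1(\R^k)$ and $s_2(\R^k)$ are different subsets of $G$. You need that each $s_g(\R^k)$ is isometrically embedded (it is totally geodesic, and $q$ is $1$-Lipschitz onto $(\R^k, s_g^*g)$ because the splitting is perpendicular), together with Corollary \ref{CloseLiftCosets}, which places the two flats at bounded Hausdorff distance after a translation; the restrictions then agree with the respective Euclidean distances only up to an additive constant, which suffices since rough similarity ignores additive errors and the eigenvector-direction limits you propose go through unchanged. Second, be precise about which metric on $SL_k(\R)/SO_k(\R)$ you are matching: unpacking Definition \ref{Def:SpaceOfMetrics} for two inner products on $\R^k$ yields $\log(\lambda_1/\lambda_k)$, the logarithm of the eccentricity (ratio of extreme generalized eigenvalues), which is an invariant Finsler-type distance on the symmetric space, \emph{not} the standard Riemannian symmetric-space metric; for $k\ge 3$ the two are not even proportional, so if your final identification targets the standard Riemannian metric the isometry claim fails, whereas with the paper's convention it is the tautological identification.
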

	
	This generalizes Le Donne-Pallier-Xie's theorem that all left-invariant Riemannian distances on rank-1 Sol-type groups are all roughly similar, as $SL_1(\R)/SO_1(\R)$ consists of a single point.
	
	Finally, we obtain a corollary concerning the metric properties of self quasi-isometries of nondegenerate higher-rank Sol-type groups for some groups. 
	
	\begin{thm}\label{IntrinsicRIsThmIntro}
		Let $G$ be a nondegenerate unimodular abelian-by-abelian higher-rank Sol-type group. Let $d$ be any left-invariant distance on $G$. There is a finite-index subgroup of $QI(G,d)$ consisting of maps that are rough isometries for every perpendicularly split left-invariant Riemannian distance.
	\end{thm}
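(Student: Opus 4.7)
The plan is to reduce the statement to a direct application of quasi-isometric rigidity, combined with the elementary fact that left translations by elements of $G$ are global isometries of every left-invariant Riemannian metric on $G$.

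First, invoke the quasi-isometric rigidity theorem for nondegenerate unimodular abelian-by-abelian higher-rank Sol-type groups, as established in the work of Eskin-Fisher-Whyte and Peng. In this setting, every self quasi-isometry of $G$ coarsely preserves the $\R^k$-fibration and induces a quasi-isometry on the base $\R^k$ that must permute the weight hyperplanes of the $\R^k$-action on the $\mathbf{N}_i$; the higher-rank, abelian-by-abelian, and unimodularity hypotheses together force the induced map on $\R^k$ to lie in a finite extension of the Euclidean translation group, and more strongly force $\phi$ itself to lie at bounded distance from a left translation of $G$, modulo a finite symmetry group $F$. Extract the kernel $H$ of the resulting homomorphism $QI(G,d)\to F$: it has finite index, and for each $\phi\in H$ there is some $g\in G$ with $\sup_{x\in G} d(\phi(x), L_g(x))<\infty$.

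Second, because any two left-invariant Riemannian distances on $G$ are bi-Lipschitz equivalent, this supremum is also finite with $d$ replaced by any other left-invariant Riemannian distance $d'$. For any perpendicularly split $d'$, the left translation $L_g$ is a global $d'$-isometry, so $\phi$ is at bounded $d'$-distance from a $d'$-isometry, and hence is a rough isometry of $(G,d')$ with multiplicative constant $1$ and an additive constant depending on $d'$. Since $d'$ was arbitrary among the perpendicularly split left-invariant Riemannian distances, every element of $H$ is a rough isometry for every such distance, as required.

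The main obstacle lies in the first step: one must verify that the cited QI-rigidity results apply in precisely the generality required, namely that the nondegeneracy, unimodularity, and abelian-by-abelian hypotheses suffice to reduce $QI(G,d)$ to a finite extension of the left-translation action rather than of the full isometry group of $(G,d)$, which for non-generic $d$ can carry a nontrivial compact stabilizer of the identity. If existing work does not directly address this subtlety, one can use Theorem \ref{EigenvaluesAndDistancesIntro} to see how such a stabilizer would interact with other perpendicularly split metrics, pinning it down modulo bounded error and absorbing it into $H$ by passing to a further finite-index subgroup.
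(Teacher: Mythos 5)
Your argument breaks down at the first step, where you assert that the Eskin--Fisher--Whyte/Peng rigidity results force every self quasi-isometry of $G$ to lie at bounded distance from a left translation (modulo a finite symmetry group). That is not what those theorems say, and it is false: the structure theorem (quoted in the paper as Conjecture \ref{GeneralizedPengQIs}, and a theorem of Peng in the unimodular abelian-by-abelian case) only puts a quasi-isometry at finite distance from a map of the form $L_x\circ(\prod_{i=1}^n f_i)\circ\sigma$, where each $f_i$ is an essentially arbitrary (bi-Lipschitz-type) self-map of the factor $\mathbf{N}_i$, not a translation. The group $QI(G)$ of such groups is enormous -- already for rank-one Sol it is $(\mathrm{Bilip}(\R)\times\mathrm{Bilip}(\R))\rtimes\Z/2$ -- so the set of quasi-isometries at bounded distance from left translations has infinite index in $QI(G,d)$, and the ``kernel $H$'' you extract is not the finite-index subgroup the theorem asks for. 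If your first step were correct, the conclusion would indeed be trivial (and would hold for \emph{every} left-invariant Riemannian distance, with no perpendicular-splitting hypothesis), which is a sign that the reduction proves too much.

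The paper's finite-index subgroup is instead the preimage of the trivial element under $QI(G,d)\to Sym(G)$, i.e.\ the maps whose factor permutation $\sigma$ is trivial, and the real content is showing that a map $L_x\circ\prod_i f_i$ with nontrivial, non-isometric $f_i$ still distorts distances only by an additive constant. This is exactly where the main machinery of the paper enters: by Proposition \ref{NonDiagonalizableHSVProp} any two points are joined by a $C_1$-rough half-space-visiting box geodesic, whose length is concentrated in segments tangent to cosets of $s_g(\R^k)$ (preserved by $\prod_i f_i$) plus at most $n$ segments of length at most $1$ in the $\mathbf{N}_i$-directions (whose images are moved a bounded amount); summing gives $d_g(q(x),q(y))\le d_g(x,y)+C$ and symmetrically the lower bound. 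Your proposal bypasses this entirely, so the gap is not a technicality about compact stabilizers but the absence of the argument that product quasi-isometries are rough isometries, which is precisely what the box-geodesic description (valid only for perpendicularly split metrics) is needed for.
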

	
	See Theorem \ref{IntrinsicRIs} for the precise statement of this theorem. It seems likely that similar results should hold for any nondegenerate higher-rank Sol-type group. This would follow from Conjecture \ref{GeneralizedPengQIs}, which asserts that the results of Peng in \cite{Peng1, Peng2} go through  in this level of generality. 
	
	\subsection*{Outline of the paper}

	In Section \ref{sec:Preliminaries} we state the definitions we will use throughout the paper. Section \ref{sec:LieAlgebraicProperties} is devoted to generalities about the Lie algebraic properties of all splittings of the quotient $G\to \R^k$.
	
	We then set about proving the main theorem by an argument analogous to the proof in rank-1 due to \cite{LDPX}. In Section \ref{sec:GeometryOfSplittings} we describe geometric features of left-invariant Riemannian distances that admit perpendicular splittings. In Section \ref{sec:BoxGeodesicLengths}, we introduce to each path a set of \textit{half spaces} and a distance approximation denoted $\rho$, which records the shortest length of a path that visits each half-space (see Definition \ref{HSV} for details). Half-space visiting paths are particularly easy to understand and compare to one another. In Proposition \ref{EigenvalueDistanceUpperBoundProp} we compute the length of a $g_1$-half-space visiting curve with respect to a different metric, $g_2$. This provides the upper bound in Theorem \ref{EigenvaluesAndDistancesIntro}, with $d_1$ replaced with its $\rho$-approximation, and a similar lower bound. 
	
	Therefore, to prove Theorem \ref{EigenvaluesAndDistancesIntro} it suffices to find a bound, depending on $g$, on the difference between the corresponding distance and $\rho$-function, i.e., to show that the shortest half-space visiting path between two points is never more than a constant length longer than the shortest path of any kind. In \cite{LDPX}, this is proven by showing that geodesics must come within a bounded distance meeting each half-space. However, this proof does not go through in higher rank.
	
	Instead, we will take the $\R^k$-coordinates of a geodesic, and modify it in a controlled way to visit each half space. In Section \ref{EuclideanSurgery}, we introduce the technique of \textit{Euclidean curve surgery}. We describe how to change a curve in Euclidean space simultaneously in several different ways without lengthening it very much. Then in Section \ref{sec:DiagonalizableCaseCoarseGeodesics} we apply curve surgery to the $\R^k$-coordinate of our geodesic. The result is Proposition \ref{DiagonalizableHSVProp} which show that, without increasing the length of the geodesic much, we can make it visit each half-space.
	
	Finally, in Section \ref{sec:IntrinsicRIs}, we prove Theorem \ref{IntrinsicRIsThmIntro} as a corollary of the description of efficient paths obtained in Proposition \ref{DiagonalizableHSVProp}.
	
	\subsection*{Acknowledgments}
	
	I would like to thank my advisor, Tullia Dymarz, for many helpful suggestions for directions of investigation. I would also like to thank Enrico Le Donne for discussing this project with me, Eduardo Reyes for a conversation that led to Theorem \ref{SpaceOfMetricStructures}, and especially Gabriel Pallier for several suggestions to improve the paper and a conversation that led to Theorem \ref{IntrinsicRIsThmIntro}.
	
	I was supported by NSF grant 2230900 during this project. The beginning of this project took place at the Fields Institute, and I am grateful for its support.

	\section{Preliminaries} \label{sec:Preliminaries}
	
	\subsection*{Higher-rank Sol-type groups}
	
	\begin{Def} \label{Def:Derivation}
		Let $\mathfrak{n}$ be a Lie algebra. A \textit{derivation} on $\mathfrak{n}$ is a linear map $D:\mathfrak{n}\to\mathfrak{n}$ such that $D([X,Y])=[D(x),Y]+[X, D(Y)]$. 
	\end{Def}
	
	We will mostly be addressing higher-rank Sol-type groups.
	
	\begin{Def} \label{Def:SolTypeGp}
		
		A \textit{Sol-type group} is a group $G=\prod_{i=1}^{n} \mathbf{N}_i \rtimes \R^k$, such that each $\mathbf{N}_i$ is a simply-connected nilpotent Lie group with Lie algebra $\mathfrak{n}_i$, $D_i$ is a derivation on $\mathfrak{n}_i$ with all eigenvalues having positive real part, and where the factor $\R^k$ acts on each $\mathbf{N}_i$ by $(\vec{v})\mathbf{N}_i(-\vec{v})=e^{\alpha_i(\vec{v})D_i}\mathbf{N}_i$ where $\alpha_i$ is a linear map from $\R^k$ to $\R$. We will denote $\prod_{i=1}^{n}\mathbf{N}_i=\mathbf{N}$, and the quotient by $\mathbf{N}$ to be $q:G\to\R^k$. We term the $\alpha_i$ to \textit{roots}.
		
		The value of $k$ is the \textit{rank} of $G$. $G$ is of \textit{higher rank} if $k\ge 2$. We will also scale the $D_i$ and $\alpha_i$ simultaneously, to assume that the smallest real part of an eigenvalue of $D_i$ is $1$.
		
	\end{Def}

	We will study the large scale geometries of higher-rank Sol-type groups with respect to certain left-invairant Riemannian metrics. Specifically, we want those metrics for which there is some lift of $\R^k$ perpendicular to $\mathbf{N}$. 
	
	\begin{Def}
		Let $G$ be a higher-rank Sol-type group, and let $g$ be a left-invariant Riemannian metric. The metric $g$ is \textit{perpendicularly split} if there is a section $s_g:\R^k\to G$ of the quotient $q:G\to \R^k$ such that $g(s_g(\R^k), \mathbf{N})=0$. We will also say that $s_g$ \textit{splits} $G$ \textit{perpendicularly}, or that $s_g$ is a \textit{perpendicular splitting} of $g$. 
	\end{Def}
	
		In general, there is always a subspace of the Lie algebra $\mathfrak{g}$ perpendicular to $\mathbf{N}$, but this subspace need not be a Lie subalgebra. A perpendicular splitting exists for a metric precisely when the subspace is a subalgebra. Note also that in rank-1, any linear subspace of $\mathfrak{g}$ is automatically a subalgebra, so that every metric has this property.

		A perpendicular splitting gives rise to a metric $s_g^* g$ on $\R^k$. We can the compare two metrics arising this way as follows.
	
	\begin{Def}
		
		 Suppose $G=\mathbf{N}\rtimes\R^k$ is a higher-rank Sol-type group, and $s_1$ and $s_2$ are perpendicular splittings for $g_1$ and $g_2$. Coordinatize $\R^k$ with any $s_1^*g_1$-orthonormal basis $\vec{e}_1, ... \vec{e}_k$, and let $\vec{v}_1, ... \vec{v}_k$ be an $s_2^*g_2$-orthonormal set. Let the matrix $A$ rewrite vectors into $\vec{v}_1, ... \vec{v}_k$-coordinates, i.e. $A_{i,  j}=g_2(\vec{v}_j, \vec{e}_i)$, and let $\langle \cdot , \cdot \rangle$ denote the dot product. Then if we have two vectors $\vec{w}_1$, $\vec{w}_2$ written in $\vec{e}_1, ... \vec{e}_k$ coordinates, $\langle \vec{w}_1, \vec{w}_2\rangle=w_1^Tw_2=g_1(\vec{w}_1, \vec{w}_2)$, while $\langle A \vec{w_1}, A\vec{w}_2\rangle=\vec{w}_1^TA^TA\vec{w}_2=g_2(\vec{w}_1, \vec{w}_2$. In this sense, the matrix $A^TA$ is the \textit{change of metric} between $s_1^*g_1$ and $s_2^*g_2$. It is evidently symmetric, so that all its singular values are in fact eigenvalues.
		
	\end{Def}
	
	We will often tacitly use this last fact and refer to the top eigenvalue as the most the change of metric can stretch a vector.
	
	A perpendicular splitting also gives rise to a coordinate system on $G=\mathbf{N} s_g(\R^k)$ by writing $h s_g(\vec{v})$ as $(h, \vec{v})$. For each $\vec{h}_i$ in $\mathfrak{n}_i=T_e(\mathbf{N}_i)$, this coordinate system gives us a non-left-invariant vector field everywhere tangent to cosets of $\mathbf{N}_i$, defined by right-translations of $\vec{h}_i$ by vectors $\vec{v}$ and left-translations by elements of $\mathbf{N}$. Equivalently, these are the coordinate pieces of $dh$, where $h:G\to \mathbf{N}$ is one half of the coordinatization above.
	
	Relative to these coordinates, a tangent vector $\vec{h}_i$ to a coset $\mathbf{N}_i\vec{v}$ has the same length as $D_eL_{-\vec{v}}\vec{h}_i$. The positivity of the eigenvalues of $D_i$ shows that this length decays asymptotically exponentially with respect to $\alpha(\vec{v})$. We will compute this more precisely in Section \ref{NonDiagonalizableDerivations}. 
	
	This allows us to define certain subspaces and paths of interest.
	
	\begin{Def}\label{HalfSpaces}
		$(G,g)$ be a higher-rank Sol-type group where $g$ splits $G$ perpendicularly. Denote by $d_i$ the path metric on $\mathbf{N}_i$ induced by the metric $g$. Let $p$ and $q$ be points in $G$. Write $p^{-1}q=( \prod_{i=1}^{n} h_i, \vec{v})$. For each $i$, there is some number $H_i$ so that, for $H\ge H_i$, $d_i(e^{-H D_i}(h_i))\le 1$. Then the \textit{i}$^{th}$ \textit{half-space} associated to $p$ and $q$, is $\alpha_i^{-1}([H_i,\infty))$. 
	\end{Def}
	
	The majority of the sequel will focus on comparing the lengths of paths that visit each half-space in turn.
	
	\begin{Def}\label{HSV}
		Retain the previous notation. A path $\gamma$ between $p$ and $q$ is said to be \textit{half-space visiting}, which we abbreviate HSV, if the image of $\gamma$ meets each half space. $\gamma$ is said to be a \textit{box path} if its derivative is tangent to left cosets of $s_g(\R^k)$ everywhere except for at most $n+1$ subintervals, along which its derivative is tangent to a coset of a different one of the $\mathbf{N}_i$. 
		
		The box path between $p$ and $q$ whose length is minimal among such paths is a \textit{box geodesic} between $p$ and $q$. The length of such a path is denoted $\rho_g(p, q)$, or just $\rho(p, q)$ if the metric is understood.
	\end{Def}
	
	\subsection*{A note on coordinates}
	
	At various times we will need to project vectors onto subspaces. If $g$ is a left-invariant Riemannian metric, and $V$ is a subspace of $\mathfrak{g}$, then $\vec{w}|_V$ denotes the orthogonal projection onto $V$. We also fix the following notation. If $s_g$ splits $(G, g)$ perpendicularly, and if $\vec{w}$ is tangent to $(s_g)_*\mathfrak{a}$, then the space $\vec{w}^\perp$ is the set of vectors tangent to $ys_g(\R^k)$ orthogonal to $\vec{w}$. We extend this notation to left-invariant vector fields.
	
	Usually, this will happen when we wish to reduce a vector, such as the derivative of a curve, into its $\mathfrak{n}$ and $(s_g)_*\mathfrak{a}$-coordinates. In Section \ref{sec:GeometryOfSplittings} we will show that the field $\nabla\alpha_i$ lies in $(s_g)_*\mathfrak{a}$. This notation will therefore allow us to split the derivative of a curve into its $\nabla\alpha_i$-coordinate and the coordinates tangent to cosets of the root kernel in a less verbose way.
	
	\subsection*{Coarse Geometry}
	
	Here we describe the equivalence relations we will be interested in for various spaces.
	
	\begin{Def}
		
		Let $(X, d_X)$ and $(Y, d_Y)$ be metric spaces. A map $f:X\to Y$ is a $(K, C)$-\textit{quasi-isometry} if for all $x_1$, $x_2$ in $X$, 
		\[\frac{1}{K}d_X(x_1, x_2)-C\le d_Y(f(x_1), f(x_2))\le Kd_X(x_1, x_2)+C,\]
		and if additionally each point in $Y$ is at most distance $C$ from a point in $f(X)$.
		
		If additionally, for each $x_1$ and $x_2$ in $X$, it holds that
		\[Kd_X(x_1, x_2)-C\le d_Y(f(x_1), f(x_2))\le Kd_X(x_1, x_2)+C,\]
		then $q$ is a $(K, C)$-\textit{rough similarity}. A rough similarity with $K=1$ is a $C$-\textit{rough isometry}. In each case, we omit the parameters if we mean to say that the map is of the desired form for some choice of parameter.
		
		A bijective map $f:X\to Y$ satisfying 
		\[\frac{1}{K}d_X(x_1, x_2)\le d_Y(f(x_1), f(x_2))\le Kd_X(x_1, x_2),\]
		is a $K$-bi-Lipschitz equivalence.
		
	\end{Def}
	
	Note that by the general theory, any two left-invariant Riemannian metrics on the same group are automatically bi-Lipschitz through the identity map. Theorem \ref{IntroRIThm} gives conditions when we may replace this multiplicative error, at arbitrary scales, with an additive error. 
	
	\subsection*{Spaces of left-invariant distances}
	
	We define a notion of distance between different distances, following \cite{OregonReyes}. 
	
	\begin{Def} \label{Def:SpaceOfMetrics}
		Let $G$ be a Lie group and by $\mathscr{D}_{Riem}(G)$ denote the set of left-invariant Riemannian distances, modulo rough similarity. For left-invariant Riemannian distances $d_1$ and $d_2$, we define $\Lambda(d_1, d_2)=\inf \{ \lambda_1\lambda_2: \text{ there is a } C>0 \text{ such that } \frac{1}{\lambda_1} d_1-C\le d_2\le \lambda_2 d_1+C \}$. If $\delta_1$ and $\delta_2$ are equivalence classes of left-invariant Riemannian distances on $G$, we define $\Delta(\delta_1, \delta_2)=\inf \{\log(\Lambda(d_1, d_2)): d_i\in \delta_i\}$.
	\end{Def}
	
	It is an exercise that $\Delta$ is a metric on $\mathscr{D}_{Riem}(G)$. If $G=\R^k$, then $\mathscr{D}_{Riem}(G)$ is isometric to the symmetric space $SL_k\R/SO_k\R$. The distance between two metrics is just the logarithm of the eccentricity of the unit ball for one relative to the other.
	
	We will be interested in those metrics that admit a perpendicular splitting.
	
	\begin{Def}
		Let $G$ be a higher-rank Sol-type Lie group. Then $\mathscr{D}_{Riem, \perp}(G)$ denotes the set of left-invariant Riemannian distances, modulo rough similarity, induced by metrics that split $G$ orthogonally.
	\end{Def}
	
	\section{Lie algebraic properties of splittings} \label{sec:LieAlgebraicProperties}
	
	Let $G=\mathbf{N}\rtimes \R^k$ be a higher-rank Sol-type Lie group. In this section we will describe properties that hold for every splitting of the quotient $q:G\to \R^n$. These will matter in particular for the perpendicular splittings with respect to a pair of metrics.
	
	For this section, $\overline{A}$ denotes an arbitrary lift of $\R^k$ to $G$. Its tangent space will be spanned by the vectors $\overline{a}_1, ... \overline{a}_k$. Together with vectors in the $\mathfrak{n}_i$, this will provide a basis for the Lie algebra $\mathfrak{g}$ on which we will do our calculations.
	
	\begin{lemma} \label{KernelsOfHeintzeQuotients}
		Let $G=\prod_{i=1}^{n}\mathbf{N}_i\rtimes \R^k$ be a higher-rank Sol-type lie group, and let $s:\R^k\to G$ be a splitting of the quotient $q:G\to \R^k$. Let $\vec{a}$ be a vector in $\R^k$ and write $s(\vec{a})=\sum_{j=1}^{n} \vec{h}_j+\sum_{j=1}^k c_j\overline{a}_j$. If $\alpha_i(\vec{a})=0$, then $\vec{h}_i=\vec{0}$.
	\end{lemma}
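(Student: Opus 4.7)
The key input is that $s:\R^k\to G$ is a group homomorphism out of an abelian group, so its image $s(\R^k)$ is an abelian subgroup of $G$ and the differential $\mathfrak{s}:=(ds)_e(\R^k)\subset\mathfrak{g}$ is an abelian Lie subalgebra. In particular,
\[
[ds(\vec{a}),ds(\vec{b})]=0\qquad\text{for all }\vec{a},\vec{b}\in\R^k.
\]
My plan is to extract the desired conclusion from this single relation, using the structural identities $[\mathfrak{n}_j,\mathfrak{n}_{j'}]=0$ for $j\neq j'$, $[\mathfrak{a},\mathfrak{n}_i]=\alpha_i(\cdot)D_i(\mathfrak{n}_i)$ in the canonical semidirect-product splitting, and the abelianness of $\overline{\mathfrak{a}}$.

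To keep the expansion clean, I first reduce to the case where $\overline{A}$ is the canonical semidirect-product lift. For a general abelian lift $\overline{A}$, each basis vector differs from its canonical counterpart by some $z_j\in\mathfrak{n}$, and the relation $[\overline{a}_j,\overline{a}_{j'}]=0$ constrains the $\mathfrak{n}_i$-components $z_j^{(i)}$. An auxiliary induction on the lower central series of $\mathfrak{n}_i$, using that $D_i$ induces an invertible operator on each quotient $\mathfrak{n}_i^{(m)}/\mathfrak{n}_i^{(m+1)}$ (its eigenvalues there form a subset of those of $D_i$, hence all have positive real part), forces $z_j^{(i)}=\alpha_i(dq\,\overline{a}_j)\,\zeta_i$ for some fixed $\zeta_i\in\mathfrak{n}_i$. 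Consequently, the $\mathfrak{n}_i$-component of $ds(\vec{a})$ in the $\overline{A}$-decomposition differs from its $\mathfrak{n}_i$-component in the canonical decomposition by a multiple of $\alpha_i(\vec{a})$, so the two agree on $\ker\alpha_i$ and it suffices to prove the lemma in the canonical decomposition.

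Writing $ds(\vec{a})=\sum_i X_i(\vec{a})+\vec{a}^{\mathrm{can}}$ with $X_i(\vec{a})\in\mathfrak{n}_i$, expanding $[ds(\vec{a}),ds(\vec{b})]=0$, and projecting onto the $\mathfrak{n}_i$-summand yields
\[
[X_i(\vec{a}),X_i(\vec{b})]+\alpha_i(\vec{a})D_iX_i(\vec{b})-\alpha_i(\vec{b})D_iX_i(\vec{a})=0.
\]
Setting $\alpha_i(\vec{a})=0$ and choosing $\vec{b}$ with $\alpha_i(\vec{b})\neq 0$ (which exists since $\alpha_i\neq 0$) gives $\alpha_i(\vec{b})D_iX_i(\vec{a})=[X_i(\vec{a}),X_i(\vec{b})]\in\mathfrak{n}_i^{(2)}$. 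I then induct on $m$: if $X_i(\vec{a})\in\mathfrak{n}_i^{(m)}$, then $[X_i(\vec{a}),X_i(\vec{b})]\in[\mathfrak{n}_i^{(m)},\mathfrak{n}_i]\subset\mathfrak{n}_i^{(m+1)}$, so $D_iX_i(\vec{a})\in\mathfrak{n}_i^{(m+1)}$, and projecting to the quotient $\mathfrak{n}_i^{(m)}/\mathfrak{n}_i^{(m+1)}$ (where the induced operator is invertible by the same eigenvalue argument) yields $X_i(\vec{a})\in\mathfrak{n}_i^{(m+1)}$. Since $\mathfrak{n}_i$ is nilpotent, iterating forces $X_i(\vec{a})=0$, which is the conclusion. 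The main technical hurdle is the preliminary reduction from an arbitrary lift to the canonical one; once that is in place, the main induction is a routine eigenvalue argument.
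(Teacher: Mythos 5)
Your proof is correct and follows essentially the same route as the paper's: use abelianness of the splitting's tangent algebra, bracket with a vector on which $\alpha_i\neq 0$, project onto $\mathfrak{n}_i$, and induct along the lower central series, where the positivity of the real parts of the eigenvalues of $D_i$ forces the component to vanish step by step. Your added care with the derivation $D_i$ (via invertibility on the graded quotients) and the preliminary reduction from an arbitrary abelian lift $\overline{A}$ to the canonical semidirect-product lift only make explicit details that the paper's argument leaves implicit.
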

	
	\begin{proof}
		Let $\vec{a_1}$ be some element of $\R^k$ for which $\alpha_i(\vec{a_1})\ne 0$, and let $\sum_{j=1}^{n} \vec{h}_{j,1}+\sum_{j=1}^k c_{j,1}\overline{a}_j$ be the element corresponding to $s(\vec{a_1})$ in the $T_e(s(\R^k))$. Then since the Lie algebra is abelian, 
		\[\Big[\sum_{j=1}^{n} \vec{h}_{j,1}+\sum_{j=1}^k c_{j,1}\overline{a}_j, \sum_{j=1}^{n} \vec{h}_j+\sum_{j=1}^k c_j\overline{a}_j\Big]=0.\]
		
		The $\mathbf{n}_i$-coordinate of the above evaluates to
		\[ \alpha_i(s(\vec{a_1}))(\vec{h}_i) -\alpha_i(s(\vec{a})) \vec{h}_{j,1}+[\vec{h}_{i, 1}, \vec{h}_i].\]
		Since $\vec{a}$ is in the kernel of $\alpha_i$, we cancel to obtain
		\[\alpha_i(s(\vec{a_1}))\vec{h}_i+[\vec{h}_{i, 1}, \vec{h}_i]=0.\]
		
		Now, consider the filtration of $\mathfrak{n}_i$ by its lower central series. Let $\mathfrak{n}_i=\mathfrak{n}_{i,0}$ and $\mathfrak{n}_{i, j+1}=[\mathfrak{n}_i, \mathfrak{n}_{i, j}]$. We say an element is of step $m$ if it is in $\mathfrak{n}_m$ but not $\mathfrak{n}_{m+1}$. If $\vec{h}_i$ is nonzero and of step $m$, then $[\vec{h}_{i, 1}, \vec{h}_i]$ is of step at least $m+1$. Since $\alpha_i(s(\vec{a_1}))\ne 0$ by assumption, then we have shown two elements of different step to be equal. It follows that $\vec{h}_i$ is $0$.
	\end{proof}
	
	\begin{prop}\label{HeintzeCoordinates}
		
		Let $G=\prod_{i=1}^{n} \mathbf{N}_i\rtimes \R^k$ be a higher-rank Sol-type Lie group, and let $s:\R^k\to G$ be a splitting of the quotient $q:G\to \R^k$. Let $s(\vec{a_j})=(\vec{h_j}, \overline{\vec{a_j}})$. If $\alpha_i(\vec{a_1})=\alpha_i(\vec{a_2})$, then the $\mathbf{N}_i$ components of $\vec{h_1}$ and $\vec{h_2}$ agree.
		
	\end{prop}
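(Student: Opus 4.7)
The plan is to deduce Proposition \ref{HeintzeCoordinates} from Lemma \ref{KernelsOfHeintzeQuotients} by a short linearity argument applied to the difference $\vec{a_1}-\vec{a_2}$.

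First I would pin down the sense in which $s$ is linear, since the expression ``$s(\vec{a})=\sum_j \vec{h}_j+\sum_j c_j\overline{a}_j$'' from Lemma \ref{KernelsOfHeintzeQuotients} treats the output of $s$ as a Lie algebra element. Because $s\colon\R^k\to G$ is a group homomorphism from an abelian group, its image $s(\R^k)$ is an abelian Lie subgroup of $G$, so $\exp$ restricted to the corresponding subalgebra is a group isomorphism. Composing with $\log$ therefore produces a linear map $\R^k\to\mathfrak{g}$, which is precisely the map denoted $s$ in the lemma and which I will continue to denote $s$.

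With linearity in hand the argument is essentially one line. By hypothesis $\alpha_i(\vec{a_1}-\vec{a_2})=\alpha_i(\vec{a_1})-\alpha_i(\vec{a_2})=0$, so $\vec{a_1}-\vec{a_2}$ lies in $\ker\alpha_i$. Lemma \ref{KernelsOfHeintzeQuotients} applied to this element says that the $\mathfrak{n}_i$-component of $s(\vec{a_1}-\vec{a_2})$ vanishes. On the other hand, by linearity of $s$ and of the projection $\mathfrak{g}\to\mathfrak{n}_i$, that $\mathfrak{n}_i$-component equals $\vec{h}_{1,i}-\vec{h}_{2,i}$, where $\vec{h}_{j,i}$ denotes the $\mathfrak{n}_i$-summand of $\vec{h}_j$. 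Hence $\vec{h}_{1,i}=\vec{h}_{2,i}$, which is the claim.

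I do not expect a real obstacle here, because the lemma does all the nontrivial Lie-theoretic work (tracking steps in the lower central series of $\mathfrak{n}_i$). The only subtlety is the identification of $s$ with its logarithm, and this is harmless because $s(\R^k)$ is abelian; once that identification is in place the proposition is a direct corollary.
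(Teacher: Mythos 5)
Your proof is correct, but it takes a genuinely different (and shorter) route than the paper. The paper only invokes Lemma \ref{KernelsOfHeintzeQuotients} for the case $\alpha_i(\vec{a_1})=\alpha_i(\vec{a_2})=0$; for the case of a common nonzero value it reruns the bracket computation from scratch, expanding $[ds(\vec{a_1}), ds(\vec{a_2})]$ and inducting over a grading of $\mathfrak{n}_i$ to match the $\mathfrak{n}_i$-components degree by degree. You instead observe that $s(\vec{a})=\exp(ds(\vec{a}))$ with $ds$ linear (each $t\mapsto s(t\vec{a})$ is a one-parameter subgroup, and $s(\R^k)$ is simply connected abelian), so the statement is a formal corollary of the lemma applied to $\vec{a_1}-\vec{a_2}\in\ker\alpha_i$. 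This buys brevity and avoids the paper's appeal to a grading of $\mathfrak{n}_i$; the lower-central-series bookkeeping is done once, in the lemma, exactly as you say. Two small points to make explicit: the identification of the ``$\mathbf{N}_i$ components'' in the statement with the $\mathfrak{n}_i$-components of $ds(\vec{a})$ is the same convention the paper's own proofs use (``the element corresponding to $s(\vec{a_l})$ in $T_e(s(\R^k))$''), so your use of the lemma is consistent with its intended reading; and your reduction, like the lemma itself, needs $\alpha_i\not\equiv 0$ so that a vector with $\alpha_i\neq 0$ exists, an assumption the paper also makes tacitly. Neither affects correctness.
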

	
	\begin{proof}
		
		If $\alpha_i(s(\vec{a_1}))=\alpha_i(s(\vec{a_2}))=0$, then this proposition is a direct application of the previous lemma. So suppose that $\alpha_i(s(\vec{a_1}))=\alpha_i(s(\vec{a_2}))\ne0.$
		
		Let $\sum_{j=1}^{n}\vec{h}_{j,l}+\sum_{j=1}^k c_{j,l} \overline{a_j}$ denote the lie algebra elements corresponding to $s(\vec{a_l})$. We will prove inductively that the elements  $\vec{h}_{j,1}$ and $\vec{h}_{j, 2}$ in $\mathfrak{n}_j$ agree at every step.
		
		Since $s$ is a splitting, the Lie algebra $T_e (s(\R^k))$ is abelian. As a result, the $\mathfrak{n}_i$-coordinate of the bracket $[\sum_{j=1}^{n}\vec{h}_{j,1}+\sum_{j=1}^k c_{j,1} \overline{a_j}, \sum_{j=1}^{n}\vec{h}_{j,2}+\sum_{j=1}^k c_{j,2} \overline{a_j}]$ must vanish (as must every other coordinate). Let $\mathfrak{n}_i=\bigoplus \mathfrak{n}_{i, l}$ be a grading of $\mathfrak{n}_i$, with respect to which $\vec{h}_{i, k}=\sum_l \vec{h}_{i, k, l}$. Then the $\mathfrak{n}_{i, 1}$-component of $[\sum_{j=1}^{n}\vec{h}_{j,1}+\sum_{j=1}^k c_{j,1} \overline{a_j}, \sum_{j=1}^{n}\vec{h}_{j,2}+\sum_{j=1}^k c_{j,2} \overline{a_j}]$ is $\alpha_i(\sum_{j=1}^k c_{j,1} \overline{a_j}) \vec{h}_{i, 2, 1}-\alpha_i(\sum_{j=1}^k c_{j,2} \overline{a_j}) \vec{h}_{i, 1, 1}=0$. But by assumption, $\alpha_i(\sum_{j=1}^k c_{j,1} \overline{a_j})=\alpha_i(s(\vec{a_1}))$ must equal $\alpha_i(s(\vec{a_2}))=\alpha_i(\sum_{j=1}^k c_{j,1} \overline{a_j})$. As a result, the assumption that $\alpha_i(s(\vec{a_1}))\ne 0$ implies that $\vec{h}_{i, 1, 1}=\vec{h}_{i, 2, 1}$.
		
		Suppose now that $\vec{h}_{i, 1, l}=\vec{h}_{i, 2, l}$ for $1\le l \le m$. Then by the grading of $\mathfrak{n}$, the $\mathfrak{n}_{i,m+1}$ coordinate (and below) of $[\vec{h}_{i, 1}, \vec{h}_{i, 2}]$ must vanish. We then calculate that the $\mathfrak{n}_{i,m+1}$ coordinate of 
		\[\Big[\sum_{j=1}^{n}\vec{h}_{j,1}+\sum_{j=1}^k c_{j,1} \overline{a_j}, \sum_{j=1}^{n}\vec{h}_{j,2}+\sum_{j=1}^k c_{j,2} \overline{a_j}\Big]\]
		is 
		\[\alpha_i\Big(\sum_{j=1}^k c_{j,1} \overline{a_j}\Big) \vec{h}_{i, 2, m+1}-\alpha_i\Big(\sum_{j=1}^k c_{j,2} \overline{a_j}\Big) \vec{h}_{i, 1, m+1},\]
		
		which must vanish. Therefore $\vec{h}_{i, 1, m+1}=\vec{h}_{i, 2, m+1}$.    
	\end{proof}
		
	\section{Geometric properties of splittings} \label{sec:GeometryOfSplittings}
	
	We now introduce a left-invariant Riemannian metric $g$ for which $s_g:\R^k\to G$ is a perpendicular splitting of the quotient $q:G\to \R^k$. The goal of this section is to establish that, for any other splitting $s_1:\R^k\to G$, there is an element of $x\in\prod_{i=1}^{n} \mathbf{N}_i$ so that the functions $xs_1$ and $s_g$ are at finite supremum distance. That is, if $d_g$ is the distance associated to $g$, then $\sup_{\vec{v}\in\R^k} d_g(xs_1(\vec{v}),s_g(\vec{v}))$ is finite. The same will then hold for any other left-invariant Riemannian distance, since each one is bi-Lipschitz to $d_g$. 
	
	We start by observing that perpendicular splittings are geodesic.
	
	\begin{lemma}
		Let $(G, g)$ split perpendicularly. Then $s_g(\R^k)$ is totally geodesic.
	\end{lemma}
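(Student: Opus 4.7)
The plan is to verify that the second fundamental form of $s_g(\R^k)$ vanishes by a Koszul calculation at the identity, then propagate the vanishing along $s_g(\R^k)$ using the fact that left translations in $G$ are isometries of $g$ and therefore preserve the Levi-Civita connection $\nabla$. The setup is first to observe that the perpendicular splitting hypothesis means exactly that the $g$-orthogonal complement $\mathfrak{a}$ of $\mathfrak{n}$ is a Lie subalgebra; since $\mathfrak{a}$ projects isomorphically onto the abelian quotient $\mathfrak{g}/\mathfrak{n}=\R^k$ and meets $\mathfrak{n}$ trivially, $\mathfrak{a}$ is necessarily abelian, and $s_g(\R^k)$ is the corresponding closed subgroup with Lie algebra $\mathfrak{a}$.

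Next I would apply the Koszul formula to left-invariant fields $X,Y\in\mathfrak{a}$ and an arbitrary $Z\in\mathfrak{n}$. Because $g$ is left-invariant, every inner product of left-invariant fields is a constant function, so the derivation terms drop and
\[
2 g(\nabla_X Y, Z) \;=\; g([X,Y], Z) \;-\; g([X,Z], Y) \;-\; g([Y,Z], X).
\]
The first term vanishes because $\mathfrak{a}$ is abelian. For the other two, differentiating the conjugation action $s_g(\vec v)\cdot \mathbf{N}_i \cdot s_g(-\vec v) = e^{\alpha_i(\vec v)D_i}\mathbf{N}_i$ from Definition \ref{Def:SolTypeGp} gives $[\mathfrak{a},\mathfrak{n}_i]\subseteq \mathfrak{n}_i$, hence $[X,Z],[Y,Z]\in\mathfrak{n}$; paired against the $\mathfrak{a}$-vectors $Y$ and $X$ by the perpendicularly split metric $g$, both inner products are zero. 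Thus $\nabla_X Y\perp \mathfrak{n}$, and by the direct sum decomposition $\mathfrak{g}=\mathfrak{n}\oplus\mathfrak{a}$ we conclude $\nabla_X Y\in\mathfrak{a}$.

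To finish, I propagate this to every point of $s_g(\R^k)$ using that $L_p^*g=g$ implies $L_p^*\nabla=\nabla$. At $p=s_g(\vec v)$, since $s_g(\R^k)$ is a subgroup containing $p$, left translation by $p$ maps $\mathfrak{a}=T_e s_g(\R^k)$ to $T_p s_g(\R^k)$, so the left-invariant extensions of vectors in $\mathfrak{a}$ form a global frame for $Ts_g(\R^k)$ along $s_g(\R^k)$. Left-invariance of $\nabla$ then gives $(\nabla_X Y)|_p = dL_p\bigl((\nabla_X Y)|_e\bigr)\in dL_p(\mathfrak{a})=T_p s_g(\R^k)$, so the second fundamental form vanishes on this frame and hence identically, establishing total geodesy.

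I do not anticipate a substantive obstacle: the hypotheses are precisely calibrated so that each term in the Koszul formula is either identically zero (abelianness of $\mathfrak{a}$) or a pairing across the perpendicular decomposition (coming from $[\mathfrak{a},\mathfrak{n}]\subseteq\mathfrak{n}$). The only place where some care is needed is confirming that $\mathfrak{a}$ being a perpendicular subalgebra forces abelianness, which uses that the quotient $\mathfrak{g}/\mathfrak{n}$ is abelian together with $\mathfrak{a}\cap\mathfrak{n}=\{0\}$.
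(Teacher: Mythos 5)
Your proof is correct and follows essentially the same route as the paper: both arguments are the Koszul formula for a left-invariant metric, with the bracket term inside $\mathfrak{a}$ vanishing and the mixed brackets landing in $\mathfrak{n}$, which is $g$-perpendicular to $s_g(\R^k)$. The paper phrases this as $\nabla_{s_g(\vec v)}s_g(\vec v)=0$ (with $\mathrm{ad}^*$ notation), while you verify $g(\nabla_X Y,Z)=0$ for $Z\in\mathfrak{n}$ and propagate by left-invariance, which is just the polarized form of the same computation.
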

	
	\begin{proof}
		We must verify that for every vector $\vec{v}\in \R^k$, $\nabla_{s_g(\vec{v})}s_g(\vec{v})=0$ where $\nabla$ denotes the Levi-Civita connection for $g$. By Koszul's formula,
		\[\nabla_{s_g(\vec{v})}s_g(\vec{v}) = \frac{1}{2}(\text{ad}_{s_g(\vec{v})} s_g(\vec{v}) -2\text{ad}^*_{s_g(\vec{v})} s_g(\vec{v})),\]
		
		where $\text{ad}^*_XY$ is the vector field $Z$ such that $g(Z, W)=g(Y, \text{ad}_X Z)$. Since $\text{ad}$ is antisymmetric, $\text{ad}_{s_g(\vec{v})} s_g(\vec{v})$ is $0$. Since $\text{ad}$ is valued in $\prod_{i=1}^{n}\mathbf{N}_i$, which is perpendicular to $s_g(\R^k)$, it follows that $\text{ad}^*_{s_g(\vec{v})} s_g(\vec{v})$ is the vector that is $0$ when paired with any other vector. 
	\end{proof}
	
	Since $s_g(\R^k)$ is totally geodesic, we can immediately describe the direction of fastest increase of the maps $\alpha_i$.
	
	\begin{lemma}
		
		Let $G=\prod_{i=1}^{n} \mathbf{N}_i\rtimes \R^k$ be a higher-rank Sol-type lie group, with a left-invariant Riemannian metric $g$ splitting $G$ perpendicularly. The field $\nabla(\alpha_i\circ q)$ is left-invariant and lies in $T_e(s_g(\R^k))$.    
	\end{lemma}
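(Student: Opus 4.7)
The plan is to handle the two assertions separately, since they come from different sources: left-invariance is a consequence of $\alpha_i \circ q$ being a Lie group homomorphism, while the containment in $T_e(s_g(\R^k))$ comes from the perpendicular splitting hypothesis.

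First I would observe that, because $\mathbf{N}$ is a normal subgroup of $G$, the quotient $q : G \to \R^k$ is a Lie group homomorphism, and hence so is $\alpha_i \circ q : G \to \R$. For a homomorphism into an abelian group, $(\alpha_i \circ q) \circ L_h$ differs from $\alpha_i \circ q$ only by the additive constant $(\alpha_i \circ q)(h)$, so differentiating shows that the $1$-form $d(\alpha_i \circ q)$ satisfies $L_h^* d(\alpha_i \circ q) = d(\alpha_i \circ q)$; it is a left-invariant $1$-form. Since $g$ is a left-invariant metric, the vector field $\nabla(\alpha_i \circ q)$ defined by $g\bigl(\nabla(\alpha_i \circ q), Y\bigr) = Y(\alpha_i \circ q)$ is the metric dual of a left-invariant $1$-form under a left-invariant inner product, hence is itself left-invariant.

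Next I would pin down where $\nabla(\alpha_i \circ q)$ lies at the identity, which by left-invariance determines it everywhere. The perpendicular splitting hypothesis gives the orthogonal decomposition $\mathfrak{g} = \mathfrak{n} \oplus T_e(s_g(\R^k))$, so to show containment in $T_e(s_g(\R^k))$ it suffices to show that $\nabla(\alpha_i \circ q)|_e$ is $g$-orthogonal to $\mathfrak{n}$. For any $X \in \mathfrak{n} = \ker dq|_e$,
\[
g_e\bigl(\nabla(\alpha_i \circ q)|_e,\; X\bigr) = X(\alpha_i \circ q) = \alpha_i(dq|_e X) = 0,
\]
so $\nabla(\alpha_i \circ q)|_e \perp \mathfrak{n}$ and therefore lies in $T_e(s_g(\R^k))$, as desired.

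I do not anticipate a serious obstacle: the only real content is recognizing that the perpendicular splitting is exactly the hypothesis needed so that ``orthogonal to $\mathfrak{n}$'' coincides with ``tangent to the splitting,'' and that the homomorphism property of $\alpha_i \circ q$ forces left-invariance of its differential. One could alternatively phrase the first step by noting that $d(\alpha_i \circ q)$, viewed as an element of $\mathfrak{g}^*$, extends uniquely to a left-invariant $1$-form on $G$ and agrees with the intrinsically defined $d(\alpha_i \circ q)$ because both annihilate $\mathfrak{n}$ and compute $\alpha_i$ on $T_e(s_g(\R^k))$; I would likely keep the homomorphism phrasing for brevity.
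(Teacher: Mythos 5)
Your proof is correct and follows essentially the same route as the paper: left-invariance via the fact that $\alpha_i\circ q$ composed with a left translation differs only by an additive constant, and the containment via the perpendicular splitting identifying $\mathfrak{n}^\perp$ with $T_e(s_g(\R^k))$. The only cosmetic difference is that you get the containment directly from $g(\nabla(\alpha_i\circ q), X)=X(\alpha_i\circ q)=0$ for $X\in\mathfrak{n}=\ker dq$, whereas the paper argues that a vector with nonzero $\mathfrak{n}$-component is strictly longer than its projection while giving the same increment of $\alpha_i\circ q$ — the same underlying observation, with your phrasing arguably cleaner.
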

	
	\begin{proof}
		
		We first prove left-invariance. Let $U$ be a small open neighborhood of the identity, and let $(h, a)\in G$. Then $(h,a)U$ is a small open neighborhood of $(h,a)$. Since $\alpha_i\circ q$ depends only on the $\R^k$-coordinate of elements, and left multiplying by $(h,a)$ acts as a translation by $a$ on the $\R^k$-coordinate of $U$, $\alpha_i\circ q \circ L_{(h,a)}=\alpha_i\circ q+\alpha_i(a)$. Hence $D_e L_{h, a} \nabla(\alpha_1\circ q)(e)=\nabla(\alpha_1\circ q)(h,a)$ as required.
		
		Suppose $\vec{v}$ is a tangent vector at the origin, and write $\vec{v}$ as a sum of vectors $\vec{v}=\sum_{i=1}^{n} \vec{h}_i+\sum_{i=1}^k c_is_g(\vec{a}_i)$. Since $s_g$, where each $\vec{h}_i$ is in $\mathbf{n}_i$. Since $s_g$ is an orthogonal splitting for $g$, 
		\[\lVert\vec{v}\rVert=\sqrt{ \lVert \sum_{i=1}^{n} \vec{h}_i\rVert^2 + \lVert\sum_{i=1}^k c_is_g(\vec{a}_i)\rVert ^2}.\]
		
		If any of the $\vec{h}_i$ are nonzero, then $\lVert\vec{v}\rVert>\lVert\sum_{i=1}^n c_is_g(\vec{a}_i)\rVert$. But this is then a shorter vector than $\vec{v}$ so that $D_e \nabla\alpha_i\circ q (\sum_{i=1}^n c_is_g(\vec{a}_i))$ is the same as $D_e \nabla\alpha_i\circ q (\vec{v})$, so that $\vec{v}$ is not the gradient direction.
	\end{proof}
	
	Henceforth, when we have a metric for which $G$ splits perpendicularly, we will simply refer to this gradient field as $\nabla\alpha_i$, suppressing both the quotient $q$ and the dependence on the metric.
	
	\begin{Def}
		Let $G$ be a higher-rank Sol-type group and $g$ a left-invariant metric splitting $G$ perpendicularly. For each $1\le i\le n$, we define group quotient $\pi_i$ as follows. Extend $\nabla\alpha_i$ to an orthogonal basis of $s_g(\R^k)$, and call this set of vectors (not including $\nabla\alpha_i$) $S_i$. Then the quotient $\pi_i$ has kernel spanned by $S_i$ and $\prod_{j\ne i} \mathbf{N}_i$. The image of $\pi_i$ is a Heintze group $\mathbf{N}_i\rtimes \R$, which we denote $\mathbf{H}_i$. We always assign $\mathbf{H}_i$ a metric keeping the lengths of and angles between the vectors $\nabla\alpha_i$ and $\mathfrak{n}_i$ the same in the quotient.
		
		Since the spaces $\mathbf{N}_i$ are not perpendicular, we choose $L$ so that the inclusion of $\mathbf{H}_i$ or any of its cosets into $G$ is $L$-Lipschitz.
	\end{Def}
	
	We next show that splittings (of any type, perpendicular or otherwise), are parametrized by $k$-tuples in $\prod_{i=1}^{n} \mathbf{N}_i$.
	
	\begin{lemma}
		Let $(G, g)$ split perpendicularly, and let $s$ be any splitting of the quotient $q:G\to\R^k$. Then for each $i$, $\pi_i(s(\R^k))$ is a 1-parameter subgroup of $\mathbf{H}_i$ that does not sit inside $\mathbf{N}_i$, and is therefore determined by two limit points, one of which is the point at infinity, and the other is in the copy of $\mathbf{N}_i$ in the boundary.
	\end{lemma}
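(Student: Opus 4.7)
The strategy is to push $s$ forward to Lie algebras and compute $d\pi_i \circ ds_e$ explicitly. Decompose $ds_e(\vec{a}) = \sum_{j=1}^{n} \vec{h}_j(\vec{a}) + V(\vec{a})$ with $\vec{h}_j(\vec{a}) \in \mathfrak{n}_j$ and $V(\vec{a})$ tangent to $(s_g)_*(\R^k)$. Since $q \circ s = \mathrm{id}$, the requirement $dq(V(\vec{a})) = \vec{a}$ forces $V(\vec{a}) = (s_g)_*(\vec{a})$. Lemma \ref{KernelsOfHeintzeQuotients} says the linear map $\vec{h}_i$ vanishes on $\ker \alpha_i$, so it factors through $\alpha_i$: there is a single fixed vector $\vec{H}_i \in \mathfrak{n}_i$ with $\vec{h}_i(\vec{a}) = \alpha_i(\vec{a})\vec{H}_i$.

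Next I would apply $d\pi_i$, whose kernel contains $\bigoplus_{j \ne i} \mathfrak{n}_j$ and $S_i$. The defining property of the gradient gives $\langle (s_g)_*(\vec{a}), \nabla\alpha_i \rangle = \alpha_i(\vec{a})$, so the $\nabla\alpha_i$-component of $(s_g)_*(\vec{a})$ is $\alpha_i(\vec{a})\|\nabla\alpha_i\|^{-2}\nabla\alpha_i$. Hence
$$d\pi_i(ds_e(\vec{a})) = \alpha_i(\vec{a})\bigl(\vec{H}_i + \|\nabla\alpha_i\|^{-2}\nabla\alpha_i\bigr),$$
a 1-dimensional image since $\alpha_i \not\equiv 0$, with nonzero $\nabla\alpha_i$-component. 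Because $\pi_i(s(\R^k))$ is a connected abelian Lie subgroup of $\mathbf{H}_i$ whose tangent space at the identity is this single line (not lying in $\mathfrak{n}_i$), it is a 1-parameter subgroup projecting surjectively onto the $\R$-factor of $\mathbf{H}_i$; in particular it is not contained in $\mathbf{N}_i$.

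For the boundary identification, I would invoke the standard Heintze picture: since $D_i$ has all eigenvalues of positive real part, $\mathbf{H}_i$ is Gromov-hyperbolic with visual boundary $\mathbf{N}_i \cup \{\infty\}$. Parametrize the 1-parameter subgroup as $t \mapsto (h(t), ct)$ with $c \ne 0$. In one time direction the $\R$-coordinate diverges upward and the curve accumulates at the point at infinity. In the opposite direction the contracting flow $e^{ctD_i}$ drives the tangent $\vec{H}_i$ to zero exponentially; integrating the defining ODE $h'(t) = h(t)\cdot e^{ctD_i}\vec{H}_i$ shows $h(t)$ converges in $\mathbf{N}_i$ to a well-defined limit point, which is the second boundary point.

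The only real obstacle is the bookkeeping in the first two steps; once Proposition \ref{HeintzeCoordinates} has reduced the $\mathfrak{n}_i$-component of $ds_e$ to a single vector, the dimension count and the boundary statement for a 1-parameter subgroup in a Heintze group are both standard.
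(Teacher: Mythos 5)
Your proposal is correct and follows essentially the same route as the paper: it uses Lemma \ref{KernelsOfHeintzeQuotients} (together with the content of Proposition \ref{HeintzeCoordinates}, which your linearity argument recovers) to see that the image of $d(\pi_i\circ s)$ is a single line with nonzero $\nabla\alpha_i$-component, notes the image is a subgroup as the homomorphic image of $\R^k$, and then invokes the standard boundary picture for a $1$-parameter subgroup of a Heintze group not contained in $\mathbf{N}_i$. The only difference is that you spell out explicitly (via the ODE for the $\mathbf{N}_i$-coordinate) the asymptotics that the paper dispatches in one sentence.
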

	
	\begin{proof}
		By Lemma \ref{KernelsOfHeintzeQuotients} and Proposition \ref{HeintzeCoordinates}, $\pi_i(s(\R^k))$ is defined by 1 parameter. It is a subgroup because it is a quotient of a group. The remainder follows because a 1-parameter subgroup in $\mathbf{H}_i$ that does not lie in $\mathbf{N}_i$ is asymptotic to the point at infinity and one other point.
	\end{proof}
	
	\begin{prop}
		Let $(G, g)$ split perpendicularly, and let $s_1$ be any other splitting of the quotient $q:G\to\R^k$ besides $s_g$. Then there is a constant $C$ and an element $x$ in $\prod_{i=1}^{n}\mathbf{N}_i$ so that $d_g(xs_1(\vec{v}), s_g(\vec{v}))\le C$ for any vector $\vec{v}$ in $\R^k$.
	\end{prop}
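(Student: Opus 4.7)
The plan is to reduce the claim to a computation in each Heintze quotient $\mathbf{H}_i$ and then reassemble via the product structure of $\mathbf{N}$. By left-invariance, $d_g(xs_1(\vec{v}), s_g(\vec{v})) = d_g(e, s_g(\vec{v})^{-1}xs_1(\vec{v}))$, and since $s_g(\vec{v})$ and $xs_1(\vec{v})$ both project to $\vec{v}$ under $q$, the element on the right lies in $\mathbf{N}$. Writing $s_1(\vec{v}) = n_1(\vec{v})s_g(\vec{v})$ with $n_1(\vec{v}) \in \mathbf{N}$, conjugation by $s_g(-\vec{v})$ acts on the $i$-th component of $xn_1(\vec{v})$ by $e^{-\alpha_i(\vec{v})D_i}$. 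So for each $i$ it suffices to uniformly bound
$$e^{-\alpha_i(\vec{v})D_i}\bigl(x_i \cdot n_1^{(i)}(\vec{v})\bigr) \in \mathbf{N}_i$$
in the intrinsic $\mathbf{N}_i$-metric. By Proposition \ref{HeintzeCoordinates}, this quantity depends only on $t := \alpha_i(\vec{v})$, so the problem is genuinely about the pair of curves $\pi_i(s_g(\R^k))$ and $\pi_i(xs_1(\R^k))$ in $\mathbf{H}_i$.

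The choice of $x$ is dictated by the previous lemma. The subgroup $\pi_i(s_1(\R^k)) = \exp(t(\vec{h}_i + \vec{a}_i)) \subset \mathbf{H}_i$, with $\vec{h}_i \in \mathfrak{n}_i$ and $\vec{a}_i$ the generator of $\R$, has a distinguished finite boundary point $y_i \in \mathbf{N}_i$ at $t \to -\infty$. Since $D_i$ is invertible on $\mathfrak{n}_i$ (all eigenvalues have positive real part) and $\mathfrak{n}_i$ is nilpotent, one can solve $\mathrm{Ad}_{y_i^{-1}}(\vec{h}_i + \vec{a}_i) = \vec{a}_i$ by a finite BCH iteration on the steps of the lower central series, and the resulting $y_i$ is precisely the geometric boundary point above. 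I set $x := (y_1^{-1}, \dots, y_n^{-1}) \in \mathbf{N}$.

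With this choice, $y_i^{-1}\exp(t(\vec{h}_i+\vec{a}_i))\cdot y_i = \exp(t\vec{a}_i) = (e, t)$ in $\mathbf{H}_i$, so $y_i^{-1}\exp(t(\vec{h}_i+\vec{a}_i)) = (e,t) y_i^{-1} = (e^{tD_i}(y_i^{-1}), t)$. Hence the $\mathbf{N}_i$-component of $y_i^{-1}n_1^{(i)}(\vec{v})$ is $e^{tD_i}(y_i^{-1})$, and the quantity to be bounded collapses to
$$e^{-tD_i}\bigl(e^{tD_i}(y_i^{-1})\bigr) = y_i^{-1},$$
independent of $\vec{v}$. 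Its intrinsic $\mathbf{N}_i$-norm is the finite constant $d_{\mathbf{N}_i}(e, y_i)$.

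Since $\mathbf{N} = \prod_i \mathbf{N}_i$ is a direct product, the $\mathbf{N}$-distance from $e$ of an element with bounded $\mathbf{N}_i$-components is bounded, and the inclusion $\mathbf{N} \hookrightarrow G$ is $1$-Lipschitz for the restricted Riemannian metric. Combining across $i$ yields $d_g(xs_1(\vec{v}), s_g(\vec{v})) \le C$ for a constant $C$ depending on the metric and on the $d_{\mathbf{N}_i}(e, y_i)$. The main obstacle is the identification of the geometric boundary point $y_i$ from the previous lemma with the algebraic conjugation-straightening element of $\mathbf{N}_i$; this is the only place the spectral hypothesis on $D_i$ (via invertibility) and the nilpotency of $\mathfrak{n}_i$ (via termination of the BCH iteration) actively enter, and everything else is left-invariant-metric bookkeeping.
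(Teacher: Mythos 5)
Your proof is correct, but it takes a genuinely more algebraic route than the paper. The paper also reduces to the Heintze quotients $\mathbf{H}_i$ and builds $x$ factorwise out of the finite boundary points of $\pi_i(s_1(\R^k))$ and $\pi_i(s_g(\R^k))$, but it then argues geometrically: the two projections are quasi-geodesics in the negatively curved $\mathbf{H}_i$ with the same endpoints at infinity, hence at bounded Hausdorff distance, and a height-matching step together with a factor-by-factor triangle inequality (via the $L$-Lipschitz inclusion of $\mathbf{H}_i$-cosets) yields the bound. You instead solve the straightening equation $\mathrm{Ad}_{y_i^{-1}}(\vec{h}_i+\vec{a}_i)=\vec{a}_i$, which is indeed solvable by your iteration because $D_i$ preserves the lower central series and is invertible on each piece, and your computation then gives the exact identity $xs_1(\vec{v})=s_g(\vec{v})x$ with $x=\prod_i y_i^{-1}$: the two splittings are conjugate by an element of $\mathbf{N}$, so $d_g(xs_1(\vec{v}),s_g(\vec{v}))$ is literally the constant $d_g(e,x)$. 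This is a stronger conclusion than the proposition and avoids hyperbolicity of $\mathbf{H}_i$ entirely; the paper's softer argument, in exchange, never needs the explicit conjugator and would survive if one only controlled the splittings coarsely. Two further remarks: the identification of $y_i$ with the geometric boundary point, which you flag as the main obstacle, is not actually needed for your argument (and is immediate anyway, since $\exp(t(\vec{h}_i+\vec{a}_i))=y_i(e,t)y_i^{-1}$ stays at constant distance $d_g(e,y_i^{-1})$ from the vertical line through $y_i$); and your sign is the right one --- with the coordinate convention $(h,\vec{v})=hs_g(\vec{v})$, left translation by $x$ acts on the lower boundary copy of $\mathbf{N}_i$ by left multiplication, so the $i$-th factor of the conjugator should be $n_in_{i,1}^{-1}$ (here $n_i=e$, giving your $y_i^{-1}$), whereas the paper's proof writes $x=\prod_i n_{i,1}n_i^{-1}$; your calculation makes the correct choice explicit.
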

	
	\begin{proof}
		Denote by $a_i$ the magnitude $|\nabla\alpha_i|$.
		
		By the previous lemma, let $n_{i,1}$ be the element of $\mathbf{N}_i$ that $\pi_i s_1(\R^k)$ is asymptotic to, and similarly $n_{i}$ the element that $\pi_1(s_g(\R^k))$ is asymptotic to. We set $x=\prod_{i=1}^{n} n_{i, 1}n_{i}^{-1}$.
		
		$\pi_i(xs_1(\R^k))$ and $\pi_i(s_g(\R^k))$ are quasi-geodesics in $\mathbf{H}_i$ which have the same endpoints at infinity. Since $\mathbf{H}_i$ is negatively curved, it follows that there is some $C_i$ so that the two images have Hausdorff distance no more than $C_i$. Moreover, since the derivative of $\pi_i(s_g(\R^k))$ at the identity is perpendicular to the horocyclic copy of $\mathbf{N}_i$, it follows that $s_g(\R^k)$ is a minimizing geodesic. Since points at $\nabla\alpha_i$-coordinate greater than $C_i$ apart cannot be within $C_i$ of one another, it follows that for any vector $\vec{v}$ in $\R^k$, $\pi_1(xs_1(\vec{v}))$ is at distance $C_1$ from a point on $\pi_1(s_g(\R^n))$ at height in $[a_i(\alpha_i(\vec{v})-C_i),a_i(\alpha_i(\vec{v})+C_i)]$, and thus no more than distance $2C_i$ from $\pi_1(s_g(\vec{v}))$, which is necessarily at height $\alpha_i(\vec{v})$.
		
		So the points $xs_1(\vec{v})$ and $s_g(\vec{v})$ are at distance at most $2C_i$ in the $i^{\text{th}}$ projection each projection. Now, since the $\mathbf{N}_i$ all commute, if $S$ is a set of indices not including $i$, then $\pi_i \prod_S n_{j, 1}n_{j}^{-1} s_1=\pi_i s_1$. Therefore, $\pi_j\big(\prod _{i=1}^{j-1} n_{i, 1}n_{i}^{-1} s_1(\vec{v})\big)=\pi_j(s_1(\vec{v}))$ while $\pi_j\big(\prod _{i=1}^{j} n_{i, 1}n_{i}^{-1} s_1(\vec{v})\big))=\pi_j(s_g(\vec{v}))$. Since these two points lie on a left coset of $\mathbf{H}_j$, they are at most $2LC_j$ apart in the metric $d_g$. It follows by the triangle inequality that 
		\begin{align*}
			d(xs_1(\vec{v}), s_g(\vec{v})) & \le \sum_{j=1}^{n} d\Big(\big(\prod_{l=1}^j n_{l, 1}n_{l}^{-1} \big)s_1 (\vec{v}), \prod_{l=1}^{j-1} n_{l, 1}n_{l}^{-1} s_1 (\vec{v})\Big)\\
			&\le \sum_{j=1}^{n} 2L C_j.
		\end{align*}  
	\end{proof}
	
	We may immediately apply this result to two different splittings $s_1$ and $s_2$, neither of which is necessarily $s_g$ by simply summing constants. Moreover, this also applies for any two cosets of $s_1(\R^k)$ and $s_2(\R^k)$.
	
	\begin{cor} \label{CloseLiftCosets}
		Let $(G,g)$ split perpendicularly, and let $s_1$ and $s_2$ be any two splittings of the quotient. There is a constant $C$ depending on $s_1$ and $s_2$ and an element $x$ in $\prod_{i=1}^{n}\mathbf{N}_i$ so that if $\vec{v}$ is any element of $\R^k$ and $x_1$ and $x_2$ are any elements of $G$, then $d((x_2xx_1^{-1})x_1s_1(\vec{v}), x_2s_2(\vec{v}))\le C$. \qed
	\end{cor}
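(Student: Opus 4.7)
The plan is to reduce the two-splitting, two-coset statement to the single-splitting, identity-coset statement of the preceding proposition by two applications of left-invariance and the triangle inequality. First I would apply the proposition to $s_1$ to produce an element $y_1 \in \prod_{i=1}^n \mathbf{N}_i$ and a constant $C_1$ with $d_g(y_1 s_1(\vec{v}), s_g(\vec{v})) \le C_1$ for all $\vec v \in \R^k$, and then to $s_2$ to produce $y_2 \in \prod_{i=1}^n \mathbf{N}_i$ and $C_2$ with $d_g(y_2 s_2(\vec v), s_g(\vec v)) \le C_2$. The triangle inequality then yields $d_g(y_1 s_1(\vec v), y_2 s_2(\vec v)) \le C_1 + C_2$ uniformly in $\vec v$.

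Next I would invoke left-invariance of $d_g$ to translate by $y_2^{-1}$, giving $d_g(y_2^{-1} y_1 s_1(\vec v), s_2(\vec v)) \le C_1 + C_2$. Setting $x \coloneqq y_2^{-1} y_1$ produces the required element of $\prod_{i=1}^n \mathbf{N}_i$ (using that this subset is a subgroup, namely $\mathbf{N}$), and the constant $C \coloneqq C_1 + C_2$. This settles the statement for $x_1 = x_2 = e$.

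To obtain the full coset statement, I would observe that $(x_2 x x_1^{-1}) x_1 s_1(\vec v) = x_2 x s_1(\vec v)$, so the left-hand quantity inside the distance is just the left $x_2$-translate of $x s_1(\vec v)$, and similarly $x_2 s_2(\vec v)$ is the left $x_2$-translate of $s_2(\vec v)$. Applying left-invariance of $d_g$ one more time reduces the desired inequality to $d_g(x s_1(\vec v), s_2(\vec v)) \le C$, which is exactly what was just established. There is no real obstacle here; the only thing worth being careful about is that the element $x$ produced is only guaranteed to lie in $\mathbf{N}$, while the conjugate $x_2 x x_1^{-1}$ appearing in the statement is not required to, so the combinatorics of the formula is just bookkeeping rather than a genuine issue.
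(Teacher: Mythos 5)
Your proposal is correct and is essentially the paper's own argument: the corollary is stated with a \qed precisely because it follows from the preceding proposition by applying it to each of $s_1$ and $s_2$, summing the constants via the triangle inequality, and using left-invariance to handle arbitrary cosets, exactly as you do. The algebraic simplification $(x_2 x x_1^{-1})x_1 s_1(\vec v)=x_2 x s_1(\vec v)$ and the observation that $x=y_2^{-1}y_1\in\mathbf{N}$ are the only bookkeeping points, and you have handled both.
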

	
	One final topic of geometric interest is the distortion of the nilpotent subgroups $\mathbf{N}_i$. We will wish to show that, if a path stays far from the defining half-space $H_i$, then a large amount of distance must be spent moving along the $\mathfrak{n}_i$-directions.
		
	\begin{Def}
		Let $G$ be a higher-rank Sol-type group, with left-invariant Riemannian metric $g$ and let $s$ be any (not necessarily perpendicular) splitting. Denote the Lie algebra $\mathfrak{g}=\prod_{i=1}^n\mathfrak{n}_i\rtimes\R^k$ as before. We will say that $\mathbf{N}_i$ is \textit{eventually exponentially distorted} if there exist positive constants $T$ and $C$, and $a>1$, so that if $\alpha_i(\vec{v})\ge T$, then for all $\vec{h}$ in $\mathfrak{n}_i$, 
		\[Ca^{\alpha_i(\vec{v})} \lVert\vec{h} \rVert_g\le \lVert R_{s(-\vec{v})}(\vec{h})\rVert_g,\]
		where $R_{x}$ denotes right-multiplication by $x$.
	\end{Def}
	
	Note that by the left-invariance of $g$ and the fact that left and right multiplications commute, if $\mathbf{N}_i$ is eventually exponentially distorted, we can make the same statement for each of its cosets.
	
	As a warm-up for the general case, we will show that when the derivation $D_i$ is diagonalizable, then $\mathbf{N}_i$ is eventually exponentially distorted.
	
	\begin{lemma} \label{PathsThatAvoidHoroballs}
		
		Let $G=\prod_{i=1}^n \mathbf{N}_i \rtimes\R^k$ be a higher-rank Sol-type group, and let $g$ be a left invariant Riemannian metric on $G$ with $s$ any splitting. Let the derivation $D_i$ be diagonalizable over the real numbers, with each eigenvalue positive as before, and the smallest normalized to $1$. There are constants $C>0$ and $a>1$ so that for each $\vec{v}$ in $\R^k$ and $h$ in $\mathfrak{n}_i$,
		\[ Ca^{\alpha_i(\vec{v})}\lVert \vec{h}\rVert_g \le \lVert R_{s(-\vec{v})}(\vec{h})\rVert_g. \]
		
	\end{lemma}
	
	That is, when $D_i$ is diagonalizable, $\mathbf{N}_i$ is immediately exponentially distorted (eventually exponentially distorted with $t=0$).
	
	\begin{proof}
		
		Let $g_i$ be a metric arising from $g|_{\mathfrak{n}_i}$ by making the $D_i$-eigenbasis of $\mathfrak{n}_i$ orthogonal. Then the desired inequality holds with $C=1$ and $a=e$ for $g_i$. Let the change of metric from $g|_{\mathfrak{n}_i}$ to $g_i$ have top and bottom eigenvalues $\lambda_{1}$ and $\lambda_{\dim\mathfrak{n}_i}$. 
		
		\begin{align*}
			\lVert R_{s(-\vec{v})}(\vec{h})\rVert_g &\ge \frac{1}{\lambda_1} \lVert R_{s(-\vec{v})}(\vec{h})\rVert_{g_i}\\
			&\ge \frac{1}{\lambda_1} e^{\alpha_i(\vec{v})}\lVert \vec{h}\rVert_{g_i}\\
			&\ge \frac{\lambda_{ \dim\mathfrak{n}_i}}{\lambda_1} e^{\alpha_i(\vec{v})}\lVert \vec{h}\rVert_{g}.
		\end{align*}
		
		The Lemma then holds for $C=\frac{\lambda_{ \dim\mathfrak{n}_i}}{\lambda_1}$ and $a=e.$
	\end{proof}
	
	So far we have not used the parameter $T$ in the definition of eventual exponential distortion. This parameter will be necessary when one of the derivations cannot be diagonalizable. Our analysis of the on $\mathfrak{n}_i$ in this case follows Peng in \cite{Peng1, Peng2}.
	
	\begin{Def}
		
		Let $\mathfrak{n}$ be a nilpotent Lie algebra, with $D:\mathfrak{n}\to \mathfrak{n}$ a derivation. Choose some basis for which $D$ is in real Jordan form, and decompose $D$ into a sum $D=\delta+\nu+\sigma$, where $\delta$ is diagonal, $\nu$ is strictly upper triangular, and $\sigma$ is antisymmetric. Then the \textit{Absolute Jordan Form} of $D$, denoted $|D|$, is $\delta+\nu$.
		
	\end{Def}
	
	We first show that, at the level of Lie groups, $D$ and $|D|$ give rise to bi-Lipschitz equivalent Heintze groups.
	
	\begin{lemma}
		
		Let $N$ be a simply-connected nilpotent Lie group with Lie algebra $\mathfrak{n}$, and $D:\mathfrak{n}\to \mathfrak{n}$ a derivation. Let $G=N_D\rtimes\R$, and $|G|=N_{|D|}\rtimes\R$, with left-invariant Riemannian metrics $g$ and $|g|$ on $G$ and $|G|$. Then there is an $L$ depending only on $g$ so that $G$ and $|G|$ are $L$ bi-Lipschitz.
		
	\end{lemma}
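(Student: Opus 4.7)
The plan is to identify $G$ and $|G|$ as the same underlying smooth manifold $N \times \R$ (both are, as smooth manifolds, semidirect products, hence diffeomorphic to their product), and to show that the identity map $\Phi : G \to |G|$ of underlying manifolds is bi-Lipschitz between the two left-invariant Riemannian distances. Since any two left-invariant Riemannian metrics on the same Lie group are bi-Lipschitz through the identity, we may replace $|g|$ with any convenient left-invariant metric on $|G|$; we choose the one whose value at the identity agrees with that of $g$ under the identification $\mathfrak{n} \oplus \R \cong T_e G \cong T_e |G|$.

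The key algebraic input comes from the real Jordan decomposition. Writing $D = S + \nu$ with $S$ semisimple and $\nu$ nilpotent commuting, and further decomposing $S = \delta + \sigma$ into its real and imaginary parts, one checks from the block structure of the real Jordan form that $\delta$, $\nu$, and $\sigma$ mutually commute. Hence $|D| = \delta + \nu$ commutes with $\sigma$, and
$$ e^{tD} = e^{t|D|}\, e^{t\sigma} = e^{t\sigma}\, e^{t|D|} \quad \text{for all } t \in \R. $$
Moreover, since $\sigma$ has purely imaginary eigenvalues, $\{e^{t\sigma}\}_{t \in \R}$ lies in a compact subgroup of $\mathrm{GL}(\mathfrak{n})$. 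Consequently, there is a constant $K \ge 1$, depending only on $D$ and on the inner product that $g$ restricts to on $\mathfrak{n}$, such that
$$ K^{-1}\|X\|_g \le \|e^{t\sigma} X\|_g \le K\|X\|_g \qquad \text{for all } t \in \R,\ X \in \mathfrak{n}. $$

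Next, I would compute the pointwise norms of tangent vectors in the two metrics and show they differ by the factor above. At a point $(n,t)$, a tangent vector decomposes uniquely as $(v, w) \in T_n N \oplus T_t \R$, and left-invariance gives
$$ \|(v,w)\|_g^2 = \|e^{-tD}(L_n^{-1})_* v\|_g^2 + w^2, \qquad \|(v,w)\|_{|g|}^2 = \|e^{-t|D|}(L_n^{-1})_* v\|_{|g|}^2 + w^2, $$
using that $\R$ acts on $\mathfrak{n}$ by $e^{tD}$ in $G$ and by $e^{t|D|}$ in $|G|$. Since $e^{-tD} = e^{-t\sigma} e^{-t|D|}$ and $|g|$ and $g$ agree on $\mathfrak{n}$ at the identity, the bound on $e^{-t\sigma}$ yields
$$ K^{-1}\|(v,w)\|_{|g|} \le \|(v,w)\|_g \le K\|(v,w)\|_{|g|}. $$
Integrating along any smooth curve gives the same estimate for Riemannian lengths under $\Phi$, and taking infima over curves joining two prescribed points gives the bi-Lipschitz estimate $L^{-1} d_{|g|}(\Phi(x), \Phi(y)) \le d_g(x,y) \le L\, d_{|g|}(\Phi(x), \Phi(y))$ with $L = K$.

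The main obstacle is verifying the commutation $[|D|, \sigma] = 0$ and the boundedness of $\{e^{t\sigma}\}$ uniformly in $t$; both follow from the standard block structure of real Jordan form but should be spelled out. The computation of the left-invariant metric at a general point $(n,t)$ is routine but must be set up carefully so that the factor $e^{\pm t\sigma}$ is isolated cleanly from $e^{\pm t|D|}$, which is exactly where the commutation of $|D|$ and $\sigma$ is used.
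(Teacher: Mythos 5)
Your proposal follows essentially the same route as the paper: factor $e^{tD}=e^{t|D|}e^{t\sigma}$ using the commutations in the real Jordan form, use that $\overline{\{e^{t\sigma}\}}$ is compact to get a uniform bi-Lipschitz bound on the $N$-directions, and conclude that the identity map of $N\times\R$ is bi-Lipschitz after choosing $|g|$ compatibly with $g$. One step as written is not justified: the displayed formula $\|(v,w)\|_g^2=\|e^{-tD}(L_{n^{-1}})_*v\|_g^2+w^2$ presumes that the standard $\R$-section is $g$-orthogonal to $N$ with unit-speed generator, which fails for a general left-invariant $g$; you only allowed yourself to normalize $|g|$, not $g$. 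This is easily repaired: either note that you may also replace $g$ by a bi-Lipschitz left-invariant metric with this product form at the identity (the comparison constant gets absorbed into $L$), or work with the $g$-perpendicular lift $A$ of $\R$, which always exists in rank one and is what the paper uses (an element of $A$ of $g$-length $l$ then acts by $e^{alD}$ for a scalar $a$ depending on $g$); alternatively, drop orthogonality altogether and observe that the maps $(u,w)\mapsto(e^{t\sigma}u,w)$ lie in a compact subset of $GL(T_eG)$, hence are uniformly bi-Lipschitz for $g_e$, which is all the pointwise estimate requires.
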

	
	\begin{proof}
		
		Since any pair of left-invariant Riemannian metrics induce bi-Lipschitz equivalent distances, we may as well take $|g|$ to agree with $g$ on $N$. Further, let $A$ and $|A|$ be lifts of $\R$ to $G$ and $|G|$ perpendicular to $N$ with respect to $g$ and $|g|$ respectively (these always exist, see \cite{LDPX} Lemma 3.1). Then if we normalize the copy of $\R$ in $G$ by its $g$-length, the positive element of $g$-length $l$ acts by $e^{alD}$ on $N$ under conjugation, where $a$ depends only on $g$. We choose the restriction of $|g|$ to $|A|$ so that the positive element of $|A|$ of $|g|$-length $l$ acts by $e^{al|D|}$ on $N$.
		
		The image of $e^{t\sigma}$ is a compact group. Thus $e^{alD}$ and $e^{al|D|}$ are $L$ bi-Lipschitz operators on $N$. If we give $G$ and $|G|$ the coordinates $(n, t)$ where $t$ is taken to lie in $A$ for $G$ and $|A|$ for $|G|$, then the map from $G$ to $|G|$ that sends each coordinate pair in $G$ to the matching coordinate pair in $|G|$ is $L$ bi-Lipschitz: it preserves the length of the $\R$ factor, and changes the lengths of vectors tangent to right cosets of $N$ in an $L$ bi-Lipschitz way. Since these spaces are orthogonal, this suffices.
	\end{proof}
	
	We next wish to understand the metric on the group $|G|$ in the case that all the eigenvalues of $D$ have positive real part. As described in \cite{Peng1}, in restriction to each Jordan block $V$,
	\[e^{t|D|}|_V=e^{|\delta|_Vt}\begin{bmatrix}
		1 & |\nu|_Vt & \frac{(|\nu|_Vt)^2}{2} & \dots & \frac{(|\nu|_Vt)^n}{n!}\\ 0 & 1 & |\nu|_Vt & \dots & \frac{(|\nu|_Vt)^{n-1}}{(n-1)!}\\
		& & \ddots & & \vdots \\ & & & 1 & |\nu|_Vt \\
		& & & & 1
	\end{bmatrix},\]
	where $|\delta|_V$ and $|\nu|_V$ are the rates of change of $\delta$ and $\nu$ with respect to length on the Jordan block $V$. Working block-by-block, we will get a lower bound good enough to show that if $G$ is a higher-rank Sol-type group, and one of its defining pairs $(\mathbf{N}_i, D_i)$ is $(N, D)$, then $\mathbf{N}_i$ is eventually exponentially distorted.
	
	\begin{prop}
		Let $D$ be a derivation on the nilpotent Lie algebra $\mathfrak{n}$ all of whose eigenvalues have positive real parts, and let $\mathbf{N}$ the simply connected nilpotent Lie group corresponding to $\mathfrak{n}$. Let $|G|=\mathbf{N}_{|D|}\rtimes\R$ with left-invariant Riemannian metric $g$, and let $\vec{h}$ be tangent to $\mathfrak{n}$. Denote by $|\delta|$ the minimum $|\delta_V|$ on any Jordan block $V$. There are positive real numbers $C$ and $T$ so that if $t>T$, then \[Ce^{\frac{|\delta|t}{2}}\lVert\vec{h}\rVert\le \lVert R_{-t}\vec{h}\rVert,\]
		where $-t$ here refers to the lift of $t\in \R$ to $|G|$.
	\end{prop}

	\begin{proof}
		
		We will prove that $\lVert R_{t}\vec{h}\rVert\le C' e^{\frac{-|\delta|t}{2}}\lVert \vec{h}\rVert$, for $C'$ some positive constant. This will show the result for the vector $R_{-t}\vec{h}$. Since right-translation is a permutation, the result will then hold for each vector $\vec{h}$.
		
		Choose a basis $(\vec{v}_1, ... \vec{v}_m)$ for $\mathfrak{n}$ for which $|D|$ is in Jordan form. Let $L_1$ be the bi-Lipschitz constant comparing the norms $\lVert\cdot\rVert_{g|_{\mathbf{N}}}$ and $L^1(\vec{v})=\sum_{i=1}^m |v_i|$. 
		
		The length of $R_{t}\vec{h}$ is the same as that of $L_{-t}R_{t}\vec{h}$, which is $e^{-t|D|}(\vec{h})$. That is, right-multiplying by $t$ acts on $\vec{h}$ by the inverse of the matrix above. Since the inverse of a matrix is a rational function in the entries of the matrix, we determine that each entry in the matrix is $e^{-|\delta|_V t}P_{i, j}(|\nu|_Vt)$, where $P_{i, j}$ is a rational function and $V$ is the Jordan block containing entry ${i, j}$. 
		
		Since there are only finitely many of these, we can take $T$ large enough that for $t>T$, 
		\[e^{-|\delta|_V t}|P_{i, j}(|\nu|_Vt)|\le e^{-\frac{|\delta|t}{2}}.\]
		Thus each entry in the $R_{t} \vec{h}$ is of size at most $me^{-\frac{|\delta|t}{2}}L^1(\vec{h})$. Hence $L^1(R_t \vec{h})$ is at most $m^2e^{-\frac{|\delta|t}{2}}L^1(\vec{h})$. We therefore calculate
		
		\begin{align*}
			\lVert R_t\vec{h}\rVert_g&\le L_1L^1(R_t\vec{h})\\
			&\le m^2L_1 e^{-\frac{|\delta|t}{2}}L^1(\vec{h})\\
			&\le m^2L_1^2e^{-\frac{|\delta|t}{2}} \lVert\vec{h}\rVert_g,
		\end{align*}
		
		as desired.
	\end{proof}
	
	Since the Heinze group $|G|$ above is bi-Lipschitz to the Heinze group $G$ defined by the derivation $D$, we obtain the same result for $G$ with a different constant $C$.
	
	\begin{cor} \label{SquareRootScalingSpeed}
		Let $D$ be a derivation on a Lie algebra $\mathbf{n}$ with all eigenvalues of positive real part. Let $G=\mathbf{N}_D\rtimes \R$, with $g$ a left-invariant Riemannian metric on $G$. There  are positive real numbers $C$ and $T$ so that, for $t>T$, \[Ce^{\frac{|\delta|t}{2}}\lVert\vec{h}\rVert\le \lVert R_{-t}\vec{h}\rVert.\] \qed
	\end{cor}
	
	As a consequence, each nilpotent group $\mathbf{N}_i$ in a higher-rank Sol-type group is eventually exponentially distorted.
	
	\begin{prop} \label{AllNilpotentDirectionsAreEventuallyExponentiallyDistorted}
		Let $G=\prod_{i=1}^n \mathbf{N}_i \rtimes\R^k$ be a higher-rank Sol-type group, and let $g$ be a left invariant Riemannian metric on $G$ with $s$ any splitting. Let the derivation $D_i$ have positive real part of each of its eigenvalues, the smallest of which is normalized to be $1$. Then $\mathbf{N}_i$ is eventually exponentially distorted.
	\end{prop}
	
	\begin{proof}
		If $D_i$ is diagonalizable, this is an immediate application of Lemma \ref{PathsThatAvoidHoroballs}.
		
		If $D_i$ is not diagonalizable, then there is a quotient $q_i$ from $G$ to the Heintze group $\mathbf{N}_i\rtimes\R$ whose defining derivation is again $D_i$. Endow $\mathbf{N}_i\rtimes\R$ with a metric $g'$ agreeing with $g$ in restriction to $\mathfrak{n}_i$. Since $q_i$ is a Lie group homomorphism, it is $L$-Lipchitz for some $L>0$. Moreover, one calculates directly that $q_i\circ s(\vec{v})=\alpha_i(\vec{v})$.
		
		 As a consequence, if a vector $\vec{v}$ has $\alpha_i(\vec{v})>T$, where $T$ is the value in Corollary \ref{SquareRootScalingSpeed}, then $q_i (R_{s(\vec{v})}\vec{h})$ satisfies the conditions of Corollary \ref{SquareRootScalingSpeed}. Therefore, for any $\vec{h}$ in $\mathfrak{n}_i$,
		 
		 \begin{align*}
		 	\lVert R_{s(-\vec{v})}\vec{h} \rVert_g &\ge L \lVert q_i\circ R_{s(-\vec{v})}\vec{h} \rVert_{g'}\\
		 	&\ge L \lVert R_{-\alpha_i(\vec{v})} (q_i)_*\vec{h}\rVert\\
		 	&\ge LC e^{\frac{\alpha_i(\vec{v})}{2}}\lVert\vec{h}\rVert_{g'}\\
		 	&\ge LC e^{\frac{\alpha_i(\vec{v})}{2}}\lVert\vec{h}\rVert_{g},
		 \end{align*}
		 where in the final step, we used the fact that, $\vec{h}$ and $(q_i)_*\vec{h}$ are tangent to the respective copies of $\mathbf{N}_i$, and $g$ and $g'$ agree in restriction to the respective copies of $\mathfrak{n}_i$.
	\end{proof}
	
	Since any given higher-rank Sol-type group has only finitely many nilpotent factors $\mathbf{N}_i$, each eventually exponentially distorted with different values of the parameters $C$, $T$, and $a$, we can take these values to be uniform. In addition, in the sequel, we will care about how far a path is from visiting the half spaces $H_i$, rather than the difference in $\alpha_i$-values per se. We summarize these two rephrasings in the following corollary.
	
	\begin{cor}\label{UniformEventualExponentialDistortion}
		Let $G=\prod_{i=1}^n\mathbf{N}_i\rtimes \R^k$ be a higher-rank Sol-type group with $g$ a left-invariant Riemannian metric and $s_g$ a perpendicular splitting. Give $G$ coordinates with $s_g(\R^k)$-factor on the right. There are constants $C>0$, $T>0$, and $a>1$ so that for all $t$ and all $1\le i \le n$, the following holds. If $H$ is a half-space for $\alpha_i$, and $\vec{h}$ is a vector tangent to $\mathfrak{n}_i$ based at a point a distance $t$ from $H$, and $\vec{h}'$ is the vector equal in coordinates based at a point in $H$ then 
		\[ Ca^t \lVert\vec{h}' \rVert \le \lVert\vec{h} \rVert\]
	\end{cor}
	
	\begin{proof}
		
		Denote $\lVert\alpha_i\rVert=\lVert\nabla\alpha_i\rVert$. Also, denote $C_i$, $T_i$, and $a_i$ the values of $C$, $T$, and $a$ for $\mathbf{N}_i$ guaranteed by Proposition \ref{AllNilpotentDirectionsAreEventuallyExponentiallyDistorted}. 
		
		Then $\vec{h}$ differs from a point on the same left $\mathbf{N}_i$-coset as $\vec{h}'$ by right-multiplication by a vector $\vec{v}\in s_g(\R^k)$ so that $\alpha_i(\vec{v})=\lVert\alpha_i\rVert t$. Since lengths are left-invariant, we may as well take $\vec{h}'$ to be $R_{\vec{v}}\vec{h}$. In order to apply Proposition \ref{AllNilpotentDirectionsAreEventuallyExponentiallyDistorted} to $R_{s_g(-\vec{v}}\vec{h}'=\vec{h}$, we need $t\lVert\alpha_i\rVert\ge T_i$, i.e. $t\ge \frac{T_i}{\lVert\alpha_i\rVert}$. As long as $t\ge \frac{T_i}{\lVert\alpha_i\rVert}$, then 
		\[ C_ia_i^{\alpha_i(\vec{v})}\lVert \vec{h}'\rVert \le \lVert \vec{h}\rVert.\]
		Again, since $\alpha_i(\vec{v})=t\lVert\alpha_i\rVert t$, we obtain
		\[ C_i\Big{(}a_i^{\lVert\alpha_i\rVert}\Big{)}^t\lVert \vec{h}'\rVert \le \lVert \vec{h}\rVert.\]
		
		The result then holds for $T=\max_i \frac{T_i}{\lVert\alpha_i\rVert}$, $C=\min_i C_i$, and $a=\min_i a_i^{\lVert\alpha_i\rVert}$.
	\end{proof}
	
	\section{Comparing lengths of box geodesics} \label{sec:BoxGeodesicLengths}
	
	In this section we define, for each left-invariant Riemannian metric $g$ splitting $G$ perpendicularly, a distance approximation $\rho_g$. We then show, for a pair of left-invariant Riemannian metrics $g_1$ and $g_2$, that the distance $d_2$ for $g_2$ is bounded above in terms of the approximate metric $\rho_{g_1}$, which we will abbreviate $\rho_1$.
	
	\begin{Def}
		
		Let $G=\prod_{i=1}^n \mathbf{N}_i\rtimes\R^k$ be a higher-rank Sol-type group, and $g$ a left-invariant metric splitting $G$ perpendicularly. Let $p$ and $q$ be any two points in $G$, and let $S(p, q)$ be defined as the set \newline $\{\gamma| \gamma \text{ is a path between } p \text{ and } q, \gamma \text{ is half-space visiting box path} \}$. We define $\rho_g(p, q) = \inf_{S(p,q)} \ell_g(\gamma)$. A path $\eta$ achieving the value of $\rho_g$ will be termed a \textit{box geodesic}, and $\rho_g$ is called the \textit{box-geodesic distance} associated to $g$. We will sometimes simply refer to $\rho$ when there is only one left-invariant Riemannian metric at hand.
		
	\end{Def}
	
	It turns out that $\rho_g$ differs from the distance $d$ induced by $g$ by a constant $C$ depending on $g$. That is, box geodesics are uniformly roughly geodesic, where the implied constant depends on the underlying metric.
	
	\begin{thm} \label{BoxGeodesicsAreRoughGeodesics}
		Let $G$ be a higher-rank Sol-type group, with left-invariant Riemannian metric $g$ splitting $G$ perpendicularly. There is a $C$ such that $|d(p, q), \rho_g(p, q)|\le C$ for all $p$ and $q$.
	\end{thm}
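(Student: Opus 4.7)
The inequality $\rho_g(p,q)\ge d(p,q)$ is immediate, since every HSV box path is in particular a rectifiable path between $p$ and $q$. The content of the theorem is therefore the reverse inequality $\rho_g(p,q)\le d(p,q)+C$.

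The plan is to start from a unit-speed Riemannian geodesic $\gamma\colon[0,L]\to G$ with $L=d(p,q)$ and to convert it into an HSV box path of length at most $L+C$ in two stages. First, since the perpendicular splitting guarantees that the $\mathbf{N}$-fibers and the complement $s_g(\R^k)$ are $g$-orthogonal, the projection $q\colon G\to \R^k$ is $1$-Lipschitz, so the projected curve $\bar\gamma=q\circ\gamma$ has length at most $L$ in $\R^k$; however, it need not enter any of the half-spaces $\alpha_i^{-1}([H_i,\infty))$. Invoking the later Propositions \ref{DiagonalizableHSVProp} and \ref{NonDiagonalizableHSVProp}, proved by Euclidean curve surgery in Sections \ref{sec:DiagonalizableCaseCoarseGeodesics} and \ref{NonDiagonalizableDerivations}, we replace $\bar\gamma$ by a curve $\eta$ in $\R^k$ with the same endpoints that visits every half-space, whose length exceeds that of $\bar\gamma$ by at most a constant $C_1$ depending only on $g$. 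Lifting $\eta$ through $s_g$ based at $p$ produces a curve $\tilde\eta$ in $G$ of the same length, whose $\R^k$-coordinate matches that of $q$ but whose $\mathbf{N}$-coordinate may be incorrect.

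In the second stage, we repair the $\mathbf{N}$-coordinate by inserting short excursions along cosets of the $\mathbf{N}_i$. For each $i$, pick a parameter $t_i$ at which $\alpha_i(\eta(t_i))\ge H_i$. At the point $\tilde\eta(t_i)$, we insert a $\mathbf{N}_i$-segment in the appropriate left coset; by the very definition of $H_i$, combined with the decay estimate that a tangent vector to a coset of $\mathbf{N}_i$ at $\R^k$-level $\vec{v}$ has its ambient length scaled by $e^{-\alpha_i(\vec{v})D_i}$, the cost of achieving any prescribed $\mathbf{N}_i$-shift of $h_i$ is at most $1$. Because the subgroups $\mathbf{N}_1,\dots,\mathbf{N}_n$ mutually commute inside $\mathbf{N}$, we can select the inserted elements so that, after the unavoidable conjugation by the intervening $s_g(\R^k)$-segments, they compose to the exact $\mathbf{N}$-component of $p^{-1}q$. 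The resulting concatenation has derivative tangent to $s_g(\R^k)$ outside the $n$ insertion intervals, hence is a box path; it is HSV by construction; and its total length is at most $L+C_1+n$. Setting $C=C_1+n$ completes the argument.

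The main obstacle is of course the first stage---producing the HSV curve $\eta$ in $\R^k$ from $\bar\gamma$ with only additively bounded length increase---which is the entire purpose of Euclidean curve surgery and occupies the bulk of Sections \ref{sec:DiagonalizableCaseCoarseGeodesics} and \ref{NonDiagonalizableDerivations}. The second-stage bookkeeping, verifying that each $\mathbf{N}_i$-insertion costs at most $1$ and that the composition can be arranged to end at $q$, follows from the structure of $G$ as a semidirect product together with the contraction hypothesis on the derivations $D_i$.
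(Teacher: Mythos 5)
Your proposal is correct and follows essentially the same route as the paper: the easy inequality $d\le\rho_g$ plus a reduction of the reverse bound to the curve-surgery Propositions \ref{DiagonalizableHSVProp} and \ref{NonDiagonalizableHSVProp}, which is exactly how the paper disposes of Theorem \ref{BoxGeodesicsAreRoughGeodesics} (its proof is postponed to those propositions). Your second-stage bookkeeping (lifting the surgered curve along cosets of $s_g(\R^k)$ and inserting $\mathbf{N}_i$-segments of cost at most $1$ inside each half-space, using normality of the $\mathbf{N}_i$ and the definition of $H_i$) is the same construction the paper builds into its box paths.
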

	
	The proof of this theorem is rather involved, and we will postpone it for now. In this section we will demonstrate that Theorem \ref{BoxGeodesicsAreRoughGeodesics} implies the main theorem.
	
	\begin{prop} \label{EigenvalueDistanceUpperBoundProp}
		
		Let $G$ be a higher-rank Sol-type group, and let $g_1$ and $g_2$ be two left-invariant Riemannian metrics with perpendicular splittings $s_1$ and $s_2$. Let $d_2$ (resp. $\rho_1$) be the geodesic (resp. box-geodesic) distance associated to $g_2$ (resp. $g_1$). Let the top eigenvalue of the change of metric from $s_1^*g_1$ to $s_2^*g_2$ on $\R^k$ be $\lambda_1$. Then there is a $C$ so that for all $p$, $q$ in $G$,
		\[ d_2(p, q)\le \lambda_1\rho_1(p, q)+C.\]
		
	\end{prop}
	
	\begin{proof}
		
		Let $d_1$ be the distance for $g_1$, and let $L$ be the constant for which $d_1$ and $d_2$ are $L$ bi-Lipschitz. Denote by $C_1$ be the constant in Corollary \ref{CloseLiftCosets} applied to the metric $d_2$ and the splittings $s_1$, $s_2$. Let $x\in \prod_{i=1}^{n} \mathbf{N}_i$ so that for $n\in \prod_{i=1}^{n} \mathbf{N}_i$, $n s_1(\vec{v})$ is within $d_1$-distance $C_1$ of $nxs_2(\vec{v})$. 
		
		Suppose $\eta_1$ is a box geodesic for $g_1$. As before $\eta_1$ consists of at most $n+1$ segments $\eta_{i,1}$ in cosets $n_is_1(\R^k)$ of $s_1(\R^k)$, and at most $n$ segments $\xi_i$ in cosets of the $\mathbf{N}_i$ of length at most $1$. For notational convenience, we will parameterize each such path by $[0,1]$, and we will add extra constant paths in if necessary so that $\eta_1$ is always the concatenation $\eta_{1, 1}\xi_1\eta_{2, 1}\xi_2...\eta_{n+1, 1}$. 
		
		For each of the $\eta_{i, 1}$, denote by $\eta_{i,2}$ the segment $n_ixs_2(q(\eta_{i, 1}(t)))$. Then we see immediately that
		\[ \ell_2(\eta_{i, 2})\le \lambda_1 \ell_1(\eta_{i, 1}),\]
		
		because the two are lifts of the same path in $\R^k$, and
		\[d_2(\xi_{i,1}(t), \xi_{i, 2}(t))\le C_1,\]
		
		since the two points are at $d_1$-distance no more than $C_1$. Additionally, since $\eta_{i, 1}(1)$ and $\eta_{i+1, 1}(0)$ are separated by the segment $\xi_i$ of $d_1$-length at most $1$, it follows that 
		\[ d_2(\eta_{i, 1}(1),\eta_{i+1, 1}(0))\le L.\]
		
		Now, $\eta_{1, 1}(0)=p$ and $\eta_{n+2, 1}(1)=q$. Therefore, by the triangle inequality,
		\begin{align*}
			d_2(p,q) & \le d_2(\eta_{1, 1}(0), \eta_{1, 2}(0))+\ell_2(\eta_{1,2})+d_2(\eta_{1, 2}(1), \eta_{2,2}(0)) + ... + d_2(\eta_{n+1, 2}(1), \eta_{n+1, 1}(1))\\
			&= \sum_{i=1}^{n+1} \ell_2 (\eta_{i, 2})+\sum_{i=1}^{n} \big(d_2(\eta_{i, 2}(1), \eta_{i+1,2}(0))\big) +d_2(\eta_{1, 1}(0), \eta_{1, 2}(0)) +  d_2(\eta_{n+1, 2}(1), \eta_{n+1, 1}(1))\\
			&\le \sum_{i=1}^{n+1} \lambda_1\ell_1 (\eta_{i, 1})+\sum_{i=1}^{n} d_2(\eta_{i, 2}(1), \eta_{i+1,2}(0)) +2C_1\\
			&\le \lambda_1\rho_1(p,q) +\sum_{i=1}^{n} d_2(\eta_{i, 2}(1), \eta_{i+1,2}(0)) +2C_1\\
			&\le \lambda_1\rho_1(p,q) +\sum_{i=1}^{n} \big[d_2(\eta_{i, 2}(1), \eta_{i,1}(1)) + d_2(\eta_{i, 1}(1), \eta_{i+1,1}(0))+ d_2(\eta_{i+1, 1}(0), \eta_{i+1,2}(0))\big] +2LC_1\\
			&\le\lambda_1\rho_1(p,q) + 2nC_1+nL+2C_1.
		\end{align*}
		
		The Proposition is proven with $C=2(n+1)C_1+nL$.
	\end{proof}
	
	If $\lambda_k$ is the smallest eigenvalue of the change of metric from $s_1^*g_1$ to $s_2^*g_2$, then $\frac{1}{\lambda_k}$ is the largest eigenvalue of the reverse change of metric. As a result, reversing the roles of $g_1$ and $g_2$, and increasing the value of $C$ as necessary, we obtain 
	\[ d_1(p, q)\le \frac{1}{\lambda_k} \rho_2(p,q)+C,\]
	so that
	\[ \lambda_k d_1(p, q)-C\le \rho_2(p,q).\]
	As a consequence, Theorem \ref{BoxGeodesicsAreRoughGeodesics} will imply Theorems \ref{EigenvaluesAndDistancesIntro} and \ref{SpaceOfMetricStructures}.
	
	\section{Surgery in Euclidean Space} \label{EuclideanSurgery}
	
	In order to bound the difference between $\rho_g$ and $d_g$, the box geodesic distance and geodesic distance for a metric $g$, we will need to modify a geodesic path $\gamma$ to make it half-space visiting, and control the extra length this creates. In this section, we lay out terminology to describe the changes we will be making to the curve $q(\gamma)$. The definitions and lemmas in this section are included as separate concepts not for their mathematical depth, but because they provide an outline for the proof of Theorem \ref{BoxGeodesicsAreRoughGeodesics}. 
	
	Note that we have little control over the shape of $q(\gamma)$ based on the fact that $\gamma$ is geodesic, so all of this section will be phrased in terms of curves in Euclidean space. Throughout this section, we assume all curves are piecewise smooth.
	
	We will describe two different ways to modify a curve in Euclidean space. The first involves changing the curve's value on a sub-interval.
	
	\begin{Def}
		Let $\gamma:[a, b]$ be any curve in $\R^k$. A curve $\eta:[a,b]\to \R^k$ is a \textit{path surgery of} $\gamma$ in \textit{surgery location} $(c,d)\subset [a, b]$ if $\gamma$ and $\eta$ agree outside of $(c,d)$.
	\end{Def}
	
	The second type of modification involves attaching a loop to a curve.
	
	\begin{Def}
		Let $\gamma:[a,b]\to \R^k$ be a curve. A curve $\eta:[a, b+c]\to\R^k$ is a \textit{length-}$c$ \textit{loop surgery of} $\gamma$ \textit{at} $t$ if $\gamma(s)=
		\begin{cases}
			\eta(s) & s\le t\\
			\eta(s+c) & s\ge t
		\end{cases}$. By a slight abuse of notation, we will also say $t$ is the \textit{location} of the loop surgery.
\end{Def}

Of course, general path and loop surgeries are of little interest. Any two curves with the same endpoints differ by one loop surgery followed by one path surgery, for instance. We will describe the types of path surgeries we wish to use in Lemma \ref{LongPerpendicularLengthGivesSurgeryFamily}. However, before we get there, some preliminary work is in order. We will need to perform one path or loop surgery for each half-space that we want the curve $q(\gamma)$ to visit, and we need to make sure that we can perform them all simultaneously, i.e., that after performing a surgery, we do not change the resulting curve again in the same location. We therefore need not just single surgeries, but collections of interchangeable surgeries, so that we can deal with surgery locations intersecting.

\begin{Def}

Let $\gamma:[a, b]$ be a curve in $\R^k$. A \textit{surgery family} $F$ is a set $m$ path surgeries $\eta_1, ... \eta_m$ of $\gamma$ located on a family of $m$ disjoint open sub-intervals of $[a,b]$. We call $\eta_1, ... \eta_m$ the \textit{options} of the family $F$. If $\eta_i$ is located on the open interval $I_i$, then $[a,b]\setminus \bigsqcup I_i$ consists of a component containing $a$, a component containing $b$, and $m-1$ either closed intervals or single points interior to $[a,b]$. The $m-1$ interior components will be called the \textit{interstices} of the family $F$.
\end{Def}

Again, there is no requirement that the $\eta_i$ have anything to do with one another as $i$ varies. In our case, the curves $\eta_i$ will all be chosen to visit a certain half-space and not lengthen $\gamma$ by too much, so that we think of them as interchangeable ways to modify our curve and attain an equally-good result.

Examining the interstices lets us bound how badly two path surgery families can overlap with one another, and use that bound to show that, given a collection of loop surgeries and path surgery families, we will be able to find a single curve incorporating each loop surgery and an option of each path surgery family as long as each family has sufficiently many options,.
 
\begin{prop}\label{SimultaneousSurgery}

Let $\gamma:[a,b]\to \R^k$ be a curve. Suppose we have $m_1$ loop surgeries $\eta_1, ... \eta_{m_1}$, such that $\eta_i$ is located at $t_i$ and has length $c_i$, and $m_2$ path surgery families $F_{1}, ... F_{m_2}$, such that $m_1+m_2\le n$ and each path surgery family has $n^2$ options. Denote the options of $F_i$ as $\eta_{i, j}$ located on intervals $I_{i,j}$. Then we can find a collection of options $\eta_{1, {j_1}}, ... \eta_{m_2, j_{m_2}}$ located on disjoint intervals, none of them containing the location of a path surgery. Moreover, there will be at least $2(n-i)$ options for the $i^{th}$ path surgery family we choose.

There will then be a single (piecewise smooth) curve $\eta$ in $\R^k$ consisting of $\gamma$ minus all the chosen path surgery locations, together with the images of $\eta_i|_{[t_i, t_i+c_i]}$ and $\eta_{i,j_i}|_{I_i, j_i}$.

\end{prop}

The choice of the letter $n$ in the statement of this proposition is to match the number of nilpotent factors in the group $G=\prod_{i=1}^n\mathbf{N}_i\rtimes\R^k$, since $n$ is the number of half-spaces there are to visit.

\begin{proof}
	
	We prove the first paragraph by induction on  $n$. Families $F_2$, ... $F_{m_2}$ each have $n^2-1=(n+1)(n-1)$ interstices. If the location $I$ of an option of $F_1$ meets at least $2n-1$ options of $F_j$ (for $j\ne i$), then $I$ must cover $2n-2$ interstices of $F_j$. There are therefore at most $\lfloor\frac{n^2-1}{2n-2}\rfloor=\lfloor \frac{n+1}{2}\rfloor$ options of $F_1$ that meet $2n-1$ or more options of each $F_j$. There are at most $n-1$ total loop surgeries and path surgery families other than $F_1$, and each loop surgery meets at most $1$ option for $F_1$. Therefore, there are at least $n^2-(n-1)\lfloor \frac{n+1}{2}\rfloor$ options of $F_1$ that miss every loop surgery location and touch no more than $2n-1$ options of any other family. We calculate that 
	\begin{align*}
		n^2-(n-1)\lfloor \frac{n+1}{2}\rfloor -(2n-2) & \ge n^2-\frac{n^2-1}{2}-(2n-2)\\
		&\ge \frac{n^2}{2}-2n+2\\
		&= \frac{1}{2}(n-2)^2 \ge 0.
	\end{align*}
	
	So there are indeed at least $2n-2$ options for the first choice, each of them meeting at most $2n-1$ options of the remaining families. Since $n^2-(2n-1)=(n-1)^2$ the result follows by induction. One simply throws out any excess options of the families $F_2$, ... $F_{m_2}$ until only $(n-1)^2$ options remain, and then repeats the argument.
	
	Once we have found this collection, there are no conflicts between path and loop surgeries, or between any pair of path surgeries. Therefore, we need to show that there are no conflicts between loop surgeries.
	
	Suppose that some of the loop surgeries have the same location, and permute them so that WLOG $t_1=t_2$. Since $\eta_1(t+c_1)=\gamma(s)$, and we can simply perform surgery $\eta_2$ at the point $t+c_1$ in the curve $\eta_1$ instead, obtaining a curve 
	$\eta(s)=\begin{cases} 
		\gamma(s) & s\le t \\
		\eta_1(s) & t\le s \le t+c_1\\
		\eta_2(s-c_1) & t+c_1\le s \le t+c_1+c_2\\
		\gamma(s-c_1-c_2) & s\ge t+c_1+c_2
	\end{cases}$. By the same token, we can apply any number of loop surgeries with the same location.
\end{proof}

When we perform a collection of path and loop surgeries as described above, we will say that the resulting curve $\eta$ \textit{agrees (on the complement of the path surgery locations) with} $\gamma$ \textit{up to a parameter shift}. We will often encapsulate the parenthesized text into notation instead of writing it out.

We now come to a description of the path surgeries we will perform on the curve $q(\gamma)$. The idea is that if a curve $\gamma(t)$ in Euclidean space traverses a very long distance perpendicular to the direction $\vec{v}$ while not changing its $\vec{v}$-coordinate much, then we can replace $\vec{v}(\gamma(t))$ with a different function $f(t)$ in such a way that the resulting curve is not much longer than $\gamma$, and that $f(t)$ attains a larger value than $\vec{v}(\gamma(t))$ did. 

We use the notation $\E^k$ here to emphasize that we now care about metric properties of the Euclidean space, not just the topological property of a curve being piecewise smooth. We will also fix the notation that if $V$ is a vector subspace of $\E^k$, then $\vec{v}|_V$ will denote the orthogonal projection of a vector $\vec{v}$ onto $V$.

\begin{lemma}\label{LongPerpendicularLengthGivesSurgeryFamily}
	
	Let $\gamma$ be any path in $\E^k$, and $\vec{v}$ any $k$-dimensional unit vector. Suppose that there is an interval $[a, b]$ along which $\int_a^b \lVert \vec{\gamma'(t)}|_{\vec{v}^\perp}\rVert\, dt \ge n^2 L$, and on which $\vec{v}(\gamma(t))\ge d$. Then there is a path surgery family with $n^2$ options $\eta_i$, such that each $\eta_i$ is no more than $1$ longer than $\gamma$, and such that each $\eta_i$ visits a point whose $\vec{v}$-coordinate is at least $d+\sqrt{\frac{L}{2}}$.
	
\end{lemma}

\begin{figure} [h]

	\centering

	\includegraphics{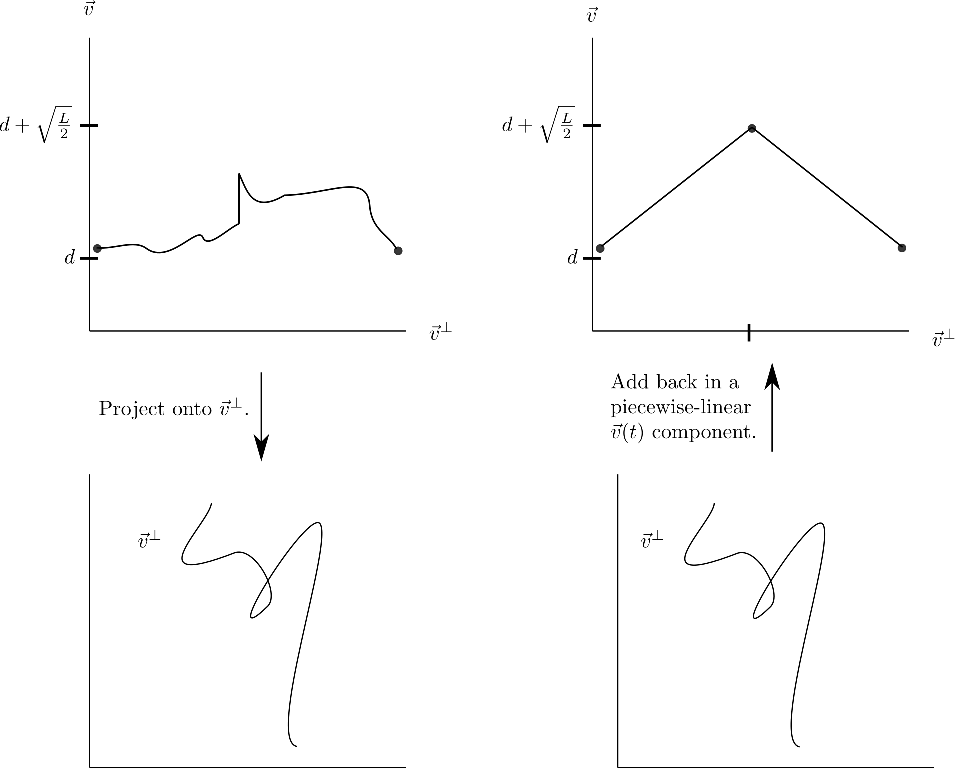}

	\caption{Schematic of the path surgery. On each of $n^2$ sub-intervals, we will preserve the $\vec{v}^{\perp}$-component of the path, and replace the $\vec{v}$ component of the path with a piecewise-linear function of the $\vec{v}^\perp$ arc length. If the curve is tangent to the $\vec{v}$-direction on some interval, projection down to $\vec{v}^\perp$ gets rid of this interval.}

\end{figure}
\begin{proof}
	
	For $i$ between $0$ and $n^2$, let times $t_i$ in $[a,b]$ be such that $\int _a^{t_i} \lVert \vec{\gamma'(t)}|_{\vec{v}^\perp}\rVert\, dt =iL$. If $\gamma'(t)$ is ever parallel to $\vec{v}$, the $t_i$ may not be uniquely determined, so choose the smallest such time for each. We will construct one surgery for each location $(t_i, t_{i+1})$. Notice that if the $\vec{v}$-coordinate of $\gamma(t)$ on any such interval is ever at least $d+\sqrt{\frac{L}{2}}$, then there is nothing to prove about this interval.
	
	So on the interval $[t_i, t_{i+1}]$, we may assume $\vec{v}(\gamma(t))$ is in $[d, d+\sqrt{\frac{L}{2}}]$. Denote $\gamma_{i, \vec{v}^\perp}:[t_i, t_{i+1}]\to\E^{k-1}$ to be $\gamma|_{\vec{v}^\perp}$ restricted to the interval $[t_i, t_{i+1}]$, and re-parameterize $\gamma_{i, \vec{v}^\perp}$ so that its arc length is linear on the interval. Let $f_i(t)$ be the piecewise function that interpolates linearly between $\vec{v}(\gamma(t_i))$ and $d+\sqrt{\frac{L}{2}}$ on the first half of the interval, and then interpolates linearly between $d+\sqrt{\frac{L}{2}}$ and $\vec{v}(\gamma(t_{i+1}))$ on the second half of the interval. That is, 
	\[f_i(t)=\begin{cases} \vec{v}(\gamma(t_i))\Big(1-\frac{t-t_i}{\frac{t_{i+1}-t_i}{2}}\Big) + \Big(d+\sqrt{\frac{L}{2}}\Big)\Big(\frac{t-t_i}{\frac{t_{i+1}-t_i}{2}}\Big) & t\le\frac{t_{i+1}+t_i}{2}\\
	\Big(d+\sqrt{\frac{L}{2}}\Big) \Big(2-\frac{t-t_i}{\frac{t_{i+1}-t_i}{2}}\Big) + \vec{v}(\gamma(t_{i+1})) \Big(\frac{t-t_i}{\frac{t_{i+1}-t_i}{2}}-1\Big) & t\ge \frac{t_{i+1}+t_i}{2}
	 \end{cases}.\]
	Take $\eta_i(t)$ to agree with $\gamma(t)$ outside of $(t_i, t_{i+1})$, and to be $\gamma_{i, \vec{v}^\perp}(t)+f_i(t)\vec{v}$ on $(t_i, t_{i+1})$, where we again identify the target of $\gamma_{i, \vec{v}^\perp}(t)$, a copy of $\E^{k-1}$, with the subspace $\vec{v}^\perp$. We must calculate the length of $\eta_i(t)$. Since $\eta_i(t)$ agrees with $\gamma$ outside of $(t_i, t_{i+1})$, we only need to compare the lengths on this interval.
	
	On each half of the interval $[t_i, t_{i+1}]$, $\eta_i(t)$ traverses distance in the $\vec{v}^\perp$ coordinates and in the $\vec{v}$ coordinate at linear speed. The total change in the $\vec{v}$ coordinate over each half is at most $\sqrt{\frac{L}{2}}$. Therefore,
	
	\begin{align*}
		\ell(\eta_i(t)|_{[t_i, \frac{t_i+t_{i+1}}{2}]}), \ell(\eta_i(t)|_{[\frac{t_i+t_{i+1}}{2}, t_{i+1}]}) & \le \sqrt{\Big(\frac{L}{2}\Big)^2+\sqrt{\frac{L}{2}}^2}\\
		&=\sqrt{\Big(\frac{L}{2}\Big)^2+\frac{L}{2}}\\
		&=\sqrt{\Big(\frac{L}{2}\Big)^2+2\Big(\frac{1}{2}\frac{L}{2}\Big)}\\
		&\le \sqrt{\Big(\frac{L}{2}\Big)^2+2\Big(\frac{1}{2}\frac{L}{2}\Big)+\frac{1}{4}}\\
		&= \sqrt{\Big(\frac{L}{2}+\frac{1}{2}\Big)^2}\\
		&=\frac{L}{2}+\frac{1}{2}\\
		&= \ell(\gamma_{i, \vec{v}^\perp}|_{[t_i, \frac{t_i+t_{i+1}}{2}]})+\frac{1}{2}, \ell(\gamma_{i, \vec{v}^\perp}|_{[\frac{t_i+t_{i+1}}{2}, t_{i+1}]})+\frac{1}{2}.
	\end{align*}
	
	Therefore, $\ell(\eta_i|_{[t_i, t_{i+1}]})\le \ell(\gamma_{i, \vec{v}^\perp})+1$. Since $\ell(\gamma|_{[t_i, t_{i+1}]})\le \ell(\gamma_{i, \vec{v}^\perp})$, it follows that $\ell(\eta_i)\le \ell(\gamma)+1$.	
\end{proof}

The following remark slightly restates Lemma \ref{LongPerpendicularLengthGivesSurgeryFamily} into the form we will use in the sequel.

\begin{rk} \label{PathSurgeryFamilyToReachSpecificHeight}
	
	Let $\gamma$ be a piecewise-smooth path in $\E^k$, $\vec{v}$ be a $k$-dimensional unit vector and $H$ a constant. If $\gamma(t)$ has $\vec{v}$-coordinate at least $H-r$ on an interval $[a,b]$ such that $\int_a^b \lVert\vec{\gamma'(t)}|_{\vec{v}^\perp}\rVert\, dt \ge 2n^2r^2$, then there is a family of surgeries of $\gamma$ with $n^2$ options $\eta_i$ so that each $\eta_i$ is no more than $1$ longer than $\gamma$ and such that each $\eta_i$ attains a point with $\vec{v}$ coordinate at least $H$. \qed
\end{rk} 

The number $H$ in question will be the boundary of a half space for the geodesic whose projection to $\E^k$ we are studying. A random piecewise-smooth path in $\E^k$ has no reason to contain long subcurves where a chosen coordinate does not vary much. Therefore, to obtain the setup needed to use Remark \ref{PathSurgeryFamilyToReachSpecificHeight}, we will need to use the fact that we started with a geodesic in the group $G$ and projected it to a Euclidean subspace. It is therefore time to re-introduce the group $G$ and describe surgery there.

\section{Existence of half-space visiting coarse geodesics.} \label{sec:DiagonalizableCaseCoarseGeodesics}




Throughout this section, we retain the notation that if $V$ is a subspace of a vector space with an inner product, then $\vec{v}|_V$ denotes the orthogonal projection onto $V$. Here $V$ will be $\mathfrak{g}$, and the inner product will be a left-invariant Riemannian metric $g$ splitting $G$ perpendicularly.

In this section we will, by a slight abuse of notation, perform curve surgeries on curves $\gamma$ in $G$. When we do this, we mean to perform curve surgery on the underlying curve $q(\gamma)$. Moreover, we will always transform $\gamma$ into a half-space visiting curve, and take the box path $\eta$ in $G$ such that $q(\eta)$ is the desired surgery. In order to define half-space visiting uniformly, throughout this section, $g$ is a fixed metric on $G$, with $s_g:\R^k\to G$ its perpendicular splitting.

Of course, we can always perform a set of loop surgeries to render $q(\gamma)$ half-space visiting, but this may lengthen $q(\gamma)$ considerably. We will therefore use the following criterion to find a suitable way to replace a loop surgery with a path surgery family. 

\begin{lemma} \label{LongPerpendicularDistanceGivesSurgeryFamily}
Let $\gamma$ be a geodesic between $p$ and $q$ in $G$. Denote the heights of the half spaces $H_i$, and $|\nabla \alpha_i|=a_i$. Let $r>0$, and suppose that there is some sub-multicurve $\mathscr{C}$ of $q(\gamma)$, such that $\mathscr{C}$ lies above $\nabla\alpha_i$-height $H_i-r$, and $\int_\mathscr{C} \lVert\frac{d(q(\xi(t)))}{dt}|_{\nabla\alpha_i^\perp}\rVert \, dt\ge (n^2+1)8n^2r^2$.

Then there is a family of path surgeries with $n^2$ options so that each surgery lengthens $q(\gamma)$ by at most $1$, and each surgery reaches height $H_i$.
\end{lemma}


\begin{proof}
	
Suppose that, above $\nabla\alpha_i$-height $H_i-2r$, there is a connected subcurve $\mathscr{C}_1$ of $q(\gamma)$ such that \newline $\int_{\mathscr{C}_2} \lVert\frac{d(q(\xi(t)))}{dt}|_{\nabla\alpha_i^\perp}\rVert\,dt \ge8n^2r^2=2n^2(2r)^2$. Then we can apply Corollary \ref{PathSurgeryFamilyToReachSpecificHeight} to $\mathscr{C}_1$ to obtain the desired surgery family.

\begin{figure} [h]
	
	\centering
	\includegraphics{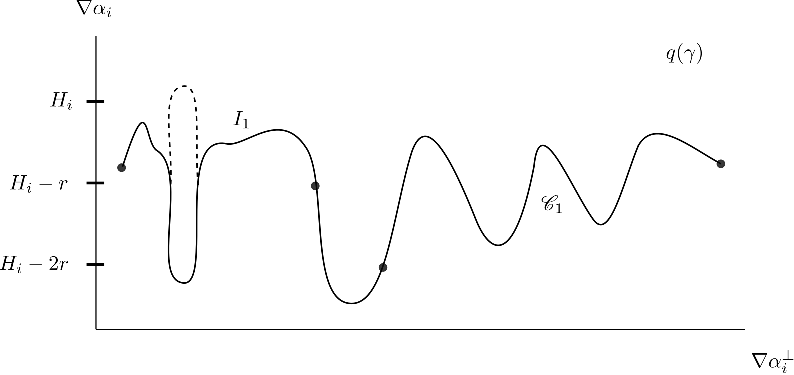}
	\caption{The cases of the proof are illustrated in this figure. The subcurves $I_1$ and $\mathscr{C}_1$ are both bounded by the indicated bold points. Either a curve like $\mathscr{C}_1$ must be long in coordinates perpendicular to $\nabla\alpha_i$, or else there must be at least $n^2$ many subcurves like $I_1$, for which reflecting the part below $H_i-r$ yields an equally-long curve that reaches height $H_i$.}
	\label{fig:LongPerpendicularDistanceGivesSurgeryFamily}
	
\end{figure}

So suppose that this does not happen. We find (connected) subcurves $I\in \mathscr{I}$ of $q(\gamma)$, such that $\int_{I\cap \mathscr{C}} \lVert \frac{dq(\gamma(t))}{dt}|_{\nabla\alpha_i^\perp}\rVert\, dt\ge 8n^2r^2$ for each one, and whose endpoints lie on $\mathscr{C}$. We can construct at least $\lfloor\frac{\int_\mathscr{C} \lVert\frac{d(q(\xi(t)))}{dt}|_{\nabla\alpha_i^\perp}\rVert\,dt}{8n^2r^2}\rfloor\ge n^2$ such curves.

Each of these at least $n^2$ curves has a point with $\nabla\alpha_i$-height of $H_i-r$ as well as points with $\nabla\alpha_i$-height no more than $H_i-2r$. Reflecting the part of the curve below $H_i-r$ about the line $\alpha_i=a_i(H_i-r)$, i.e. replacing the $\nabla\alpha_i(\gamma(t))$-coordinate on any such interval with $H_i-r+|H_i-r-\nabla\alpha_\gamma(t)|$, yields an isometric path which necessarily reaches height $H_i-r+|H_i-r-(H_i-2r)|=H_i$. So in this case, each surgery option does not lengthen $\gamma$ at all.

\end{proof}

It is not hard to find such sub-multicurves $\mathscr{C}$, as long as a geodesic $\gamma$ is far from half-space visiting in coordinate $i$. In the following lemma, $(\gamma_b^c)_i$ denotes $\gamma\cap \alpha_i^{-1}([a_ib, a_ic])$, where $a_i=|\nabla\alpha_i|$. That is $(\gamma_b^c)_i$ is the portion of $\gamma$ that lies between coordinates of $b$ and $c$ in the $\nabla\alpha_i$-direction.

We denote by $L_1$ a positive number sufficiently large so that all of the vector-space projections onto $\mathfrak{n}_i$ are $L_1$-Lipschitz. Note that the kernel of these projection map is spanned by all but one of the $\mathbf{n}_j$ (which depend only on $G$), as well as the Lie algebra of the copy of $\R^k$ perpendicular to $\mathfrak{n}$ (which depends on the metric). Therefore, both the projection maps themselves and their Lipschitz constants depend on the metric.

We will also need the following estimate.

\begin{rk}\label{ChangingSqrts}
	Let $a$, $m_1$, and $m_2$ all be greater than 0, such that $\sqrt{a^2+m_1^2}\le a+m_2$. Then $a\ge \frac{m_1^2-m_2^2}{2m_2}$. In particular, if $\sqrt{2}m_2\le m_1$, then $a\ge \frac{m_1^2}{4m_2}$.
\end{rk}

\begin{lemma} \label{ObtainingLongPerpendicularLength}

Let $\gamma$ be a geodesic between $p$ and $q$. Let $\xi$ be a half-space visiting $C$-rough box path that is a surgery of $\gamma$ at at most $n$ loci. Let $r>0$, $R\ge \sqrt{2}C$, and $b$ be any real number. Suppose that there is a sub-multicurve $\mathscr{C}$ of $(\gamma_b^{b+r})_i$, missing the path surgery locations, and so that $\int_{\mathscr{C}} \lVert\frac{dh_i(\gamma(t))}{dt}\rVert\, dt \ge L_1 R$. Suppose also that $\sqrt{2}C\le R$. Then $\int_{\mathscr{C}} \lVert \frac{d q(\gamma(t))}{dt}|_{\nabla\alpha_i^\perp}\rVert\,dt\ge \frac{R^2}{4C}-C-(n+1)r$. 
\end{lemma}

\begin{proof}    
Since $\xi$ agrees with $q(\gamma)$ up to a parameter shift on $\mathscr{C}$, removing all the movement in directions tangent to the $\mathbf{N}$-cosets from $\mathscr{C}$ cannot reduce the length of $\gamma$ by more than $C$, or else $\xi$ is shorter than $\gamma$. Therefore, we calculate

\begin{align*}
	\ell\circ q(\mathscr{C})+C & \ge \ell(\mathscr{C})\\
	&= \int_{\mathscr{C}} \sqrt{\lVert\frac{d\gamma(t)}{dt}|_{s_g(\R^k)}\rVert^2+\lVert\frac{d\gamma(t)}{dt}|_{\mathbf{N}}\rVert^2}\, dt\\
	&\ge \sqrt{\Big(\int_{\mathscr{C}} \lVert\frac{d\gamma(t)}{dt}|_{s_g(\R^k)}\rVert\, dt\Big) ^2+ \Big(\int_{\mathscr{C}} \lVert\frac{d\gamma(t)}{dt}|_{\mathbf{N}}\rVert\, dt\Big) ^2}\\
	&= \sqrt{\ell\circ q(\mathscr{C})^2+ \Big(\int_{\mathscr{C}} \lVert\frac{d\gamma(t)}{dt}|_{\mathbf{N}}\rVert\, dt \Big)^2}\\
	&\ge \sqrt{\ell\circ q(\mathscr{C})^2 + R^2}.
\end{align*}

By the assumption that $\sqrt{2}C\le R$ and Remark \ref{ChangingSqrts}, we see that $\ell\circ q(\mathscr{C})\ge \frac{R^2}{4C}$. 

\begin{figure} [t]
	
	\centering \includegraphics[scale=0.75]{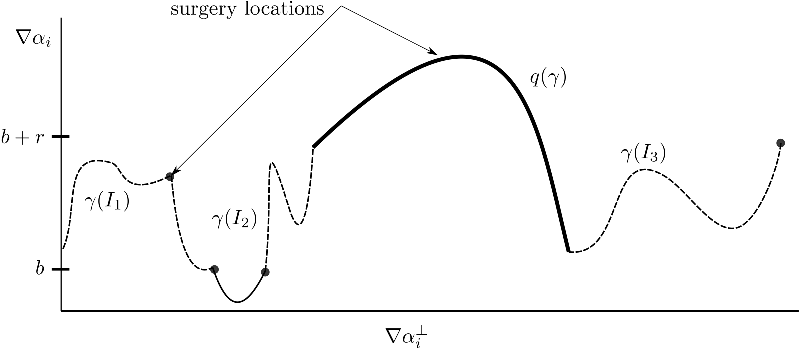}
	\includegraphics[scale=0.75]{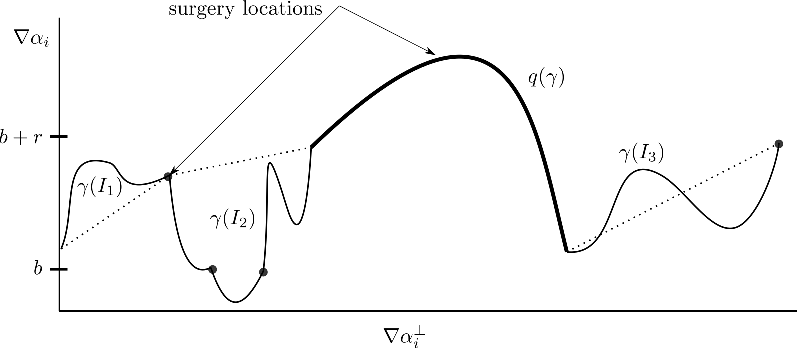}
	\caption{\textbf{Above:} The curve $\mathscr{C}$ is the dashed multicurve. The interval $I_1$ goes from the beginning of the curve until the location of the loop surgery. The interval $I_2$ goes from the loop surgery until the beginning of the path surgery, including parts of $q(\gamma)$ which are not in $\mathscr{C}$, and $I_3$ runs from the end of the path surgery until the end of the curve. \textbf{Below:} performing surgery on the bolded locations, and then changing $q(\gamma(t))$ on the intervals $I_1$, $I_2$, and $I_3$ to match the dotted lines, yields the curve $\xi_1$. Since $\ell\circ q(\mathscr{C})$ is long, the smaller $\int_\mathscr{C} \| \frac{d(q(\gamma(t))}{dt}|_{\nabla\alpha_i^\perp}\| dt$ is, the larger the variation of $\alpha_i$ must be on $\mathscr{C}$, despite the fact that $\mathscr{C}$ has $\nabla\alpha_i$-coordinate between $b$ and $b+r$. If this variation is too great, then $\xi_1$ is shorter than the geodesic $\gamma$.}
	
	\label{fig:LongPerpendicularDistance}
	
\end{figure}

An in figure \ref{fig:LongPerpendicularDistance}, let the intervals $I\in\mathscr{I}$ be all maximal intervals whose endpoints lie on the domain of $\mathscr{C}$ and in whose interiors do not meet any surgery locations of $\xi$. There are at most $n+1$ of these. Replace the $\nabla\alpha_i$-coordinate of $\xi$ on each $I\cap\mathscr{C}$ with a linear function (with respect to any parameterization; it does not matter which), and call the resulting curve $\xi_1$. 

We calculate 
\[\sum_{I\in\mathscr{I}} \int_{I\cap \mathscr{C}} \lVert\frac{d\xi_1(t)}{dt}|_{\nabla\alpha_i}\rVert\, dt\le (n+1)r.\]
But the intervals $I_k$ contain all of $\mathscr{C}$ jointly, so $\int_{ \mathscr{C}} \lVert\frac{d\xi_1(t)}{dt}
|_{\nabla\alpha_i}\rVert\, dt\le (n+1)r$, and so 
\[\ell(\xi_1|_{\mathscr{C}})\le \int_{\mathscr{C}} \lVert\frac{d\xi_1(t)}{dt}|_{\nabla\alpha_i^\perp}\rVert\,dt +(n+1)r.\]

If this is too much shorter than $\xi$ on $\mathscr{C}$, then $\xi_1$ is shorter than $\gamma$. That is
\[\int_{\mathscr{C}} \lVert\frac{d\xi_1(t)}{dt}|_{\nabla\alpha_i^\perp}\rVert\,dt +(n+1)r\ge \ell\circ q(\mathscr{C}) -C.\]
But $\xi_1(t)|_{\nabla\alpha_i^\perp}=\xi(t)|_{\nabla\alpha_i^\perp}$ and on $\mathscr{C}$, $q(\xi(t))=q(\gamma(t))$ (after parameter shift).
Therefore,
\[ \int_\mathscr{C} \lVert\frac{d\gamma(t)}{dt}|_{\nabla\alpha_i^\perp}\rVert\, dt\ge \frac{R^2}{4C}-C-(n+1)r\]
as required.
\end{proof}

Note that, as a consequence of the proof, we obtain a sub-multicurve $\mathscr{C}$ of $\gamma$ which is long perpendicular to $\nabla\alpha_i$ and also happens to miss the surgery locations of $\xi$. However, since the surgeries that Lemma \ref{LongPerpendicularDistanceGivesSurgeryFamily} gives us may overlap the surgery locations of $\xi$, this fact does not render the work in Proposition \ref{SimultaneousSurgery} superfluous. A modification of Lemma \ref{LongPerpendicularDistanceGivesSurgeryFamily} can be made to guarantee that the resulting surgery family again misses the surgery locations of $\xi$, but this still will not free us from needing to use Proposition \ref{SimultaneousSurgery}. Suppose the set of half-spaces that $\gamma$ is far from visiting is $H_1$, $H_2$, and $H_3$. As we will see in the proof Proposition \ref{DiagonalizableHSVProp}, we will perform a collection of surgeries on $\gamma$, and analyze the resulting coarse box geodesic $\xi_1$ using Lemmas \ref{ObtainingLongPerpendicularLength} and \ref{LongPerpendicularDistanceGivesSurgeryFamily}. This will give us a surgery family that lets us swap a long loop surgery for a short path surgery to reach half space $H_1$.  We will then use one member of this family to obtain a new coarse box geodesic $\xi_2$ and run the argument again for $H_2$.  However, past the second iteration, we will not only use one of the path surgeries to reach $H_2$ that we just found, but also we will be forced to pick a different surgery that makes $\gamma$ visit $H_1$. As a result, even if we modified Lemma \ref{LongPerpendicularDistanceGivesSurgeryFamily}, we could at best guarantee that the path surgery family we obtain misses the surgery locations of $\xi_i$. There would be no guarantee that it misses the choice of surgeries for any subsequent step, and thus we would need to use Proposition \ref{SimultaneousSurgery} anyway.

In order to apply Lemma \ref{ObtainingLongPerpendicularLength},  we need a sub-multicurve of a geodesic whose $\mathbf{N}_i$-length is large. However, it follows from Proposition \ref{AllNilpotentDirectionsAreEventuallyExponentiallyDistorted} that any path whose $\alpha_i$-value is much less than that of the half-space $H_i$ must have very long $\mathbf{N}_i$-length. Since $\alpha_i$ is a linear functional, this is equivalent to saying that if the path stays far enough from $H_i$, then it must have long $\mathbf{N}_i$-length. On the other hand, if the path does not stay far from $H_i$, then we can already modify it to visit $H_i$ by a loop surgery. We are therefore prepared to prove the main result. 

\begin{prop} \label{DiagonalizableHSVProp}
Let $G$ be a higher-rank Sol-type group with left-invariant Riemannian metric $g$ and $s_g$ a perpendicular splitting. Then there is a $K$ depending only on $g$ so that for any two points $p$ and $q$ in $G$, $p$ and $q$ are connected by a $K$-coarse HSV box geodesic. 
\end{prop}


\begin{proof}
Let the associated half spaces for $p$ and $q$ be $\alpha_i^{-1}([H_i, \infty))$. Let $a$, $T$, and $C$ be the constants in Corollary \ref{UniformEventualExponentialDistortion}. Take $1>\epsilon>0$ so that $(a(1-\epsilon))>1$, and let $N=\sum_{j=0}^\infty (1-\epsilon)^{j}$. Let $L$ be the constant so that each vector space projection onto the $\mathfrak{n}_i$ is $L$-Lipschitz as before. 


Recall that for $m$ a positive integer, $m!!$ is defined as the product of positive integers no more than $m$ and equal to $m$ in parity, i.e., $m!!= m(m-2)(m-4)...$. Choose $r\ge\max\{1,T\}$ sufficiently large so that for $r'\ge r$ and $C'=n(2r'+1)$, the following 3 conditions hold:

\begin{enumerate}
	\item $\frac{Ca^{r'}}{N} \frac{(2n-3)!!}{(2n-2)!!}\ge \sqrt{2}C'$,
	\item $C^2a^{2r'} \ge \frac{(2n-2)!!^2}{(2n-3)!!^2}4C'N^2L^2 \big(32n^2r'^2(n^2+1)+C'+(n+1)r'\big)$,
	\item for $j\ge 1$, $(a(1-\epsilon))^{2(j-1)r'}\ge j^2$.
\end{enumerate}

We will show the proposition for $K=(n+1)(2r+1)$.

Let $\gamma$ denote the geodesic between $p$ and $q$, and denote $r_i$ the distance from $\gamma$ to $\alpha_i^{-1}([H_i, \infty))$. Re-order the coordinates if necessary so that the $r_i$ are in decreasing order, and suppose that $r_1\ge...r_m\ge r > r_{m+1}...$ Also, let the notation $(\gamma_b^c)_i$ denote $\gamma\cap \alpha_i^{-1}([a_ib, a_ic])$ as before.

Let $x, y$ be elements of $\mathbf{N}_i$. Denote by $d_{i,s}(x,y)$ the minimal distance of a path between $(x, \vec{v})$ and $(y, \vec{v})$ tangent to $\mathbf{N}_i\vec{v}$ where $\vec{v}$ is some (hence any) vector with $\alpha_i(\vec{v})=s$. By definition, $d_{i, H_i}(p_i, q_i)=1$, where $p_i$ and $q_i$ are the $\mathbf{N}_i$-coordinated of $p$ and $q$. The content of Lemma \ref{PathsThatAvoidHoroballs} is that $d_{i, H_i}(p_i, q_i)$ grows at least exponentially as we decrease $H_i$.

Fix the following notation: If $\mathscr{C}$ is a multicurve, denote by $S$ the set of maximal subcurves of $\mathscr{C}$, each parameterized to take the interval into G. Denote by $\eta_i$, for $\eta\in S$, the $\mathbf{N}_i$-coordinate function of $\eta$. Then $\mathscr{L}_{i, s}(\mathscr{C})$ is defined to be
\[\sum_{\eta\in S} d_{i, s}(\eta_i(0), \eta_i(1)).\]

For each $1\le i\le m$, decompose $\gamma$ into the multicurves $(\gamma^{H_i-jr_i}_{H_i-(j+1)r_i})_i$, where $j$ ranges from $1$ to $\infty$. See figure \ref{fig:SlicingCurve1}. For each $i$ there must be some $j_i$ so that 
\[\mathscr{L}_{i, H_i}((\gamma^{H_i-jr_i}_{H_i-(j+1)r_i})_i) \ge \frac{(1-\epsilon)^{(j_i-1)r_i}}{N},\]
because $r_i>1$, which implies that $N\ge\sum_{j=0}^\infty(1-\epsilon)^{jr_i}$. 

\begin{figure} [H]
	
	\centering
	\includegraphics{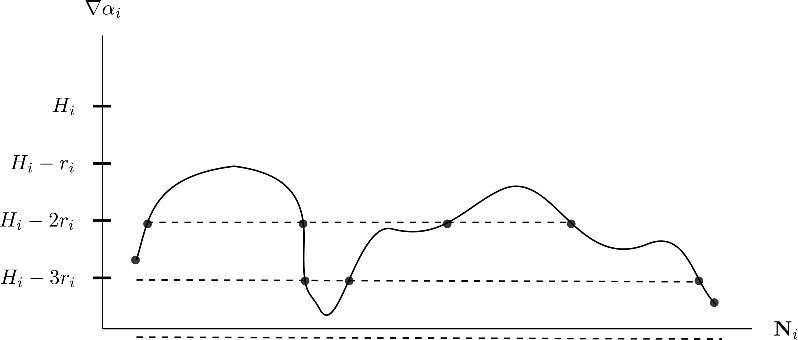}
	\caption{Slicing the geodesic $\gamma$ into pieces. $d_{i, H_i}$-distance is shown on the $x$-axis. One of the slices must traverse a sufficiently large fraction of the $d_{i, H_i}$ distance far enough below the half-space $H_i$ that we can use Lemma \ref{ObtainingLongPerpendicularLength}.}
	\label{fig:SlicingCurve1}
	
\end{figure}

We will now use these sub-multicurves one at a time, starting with $(\gamma^{H_1-j_1r_1}_{H_1-(j_1+1)r_1})_1$. Since $(\gamma^{H_1-j_1r_1}_{H_1-(j_1+1)r_1})_1$ lies below $\alpha_1^{-1}(H_1-j_1r_1)$, it follows from Corollary \ref{UniformEventualExponentialDistortion} that

\begin{equation*}
	\int_{(\gamma^{H_1-j_1r_1}_{H_1-(j_1+1)r_1})_1} \lVert\frac{dh_1(\gamma(t))}{dt}\rVert\, dt\ge \frac{Ca^{j_1r_1}(1-\epsilon)^{(j_1-1)r_1}}{N},
\end{equation*}

where the norm $\lVert\cdot \rVert$ denotes the $g$-length of the vector $\frac{dh_i(\gamma(t))}{dt}$ at the point $\gamma(t)$. Since $r_1>r\ge 1$, we conclude that 
\[\int_{(\gamma^{H_1-j_1r_1}_{H_1-(j_1+1)r_1})_1}  \lVert\frac{dh_1(\gamma(t))}{dt}\rVert\, dt\ge \frac{Ca^{r_1}(a(1-\epsilon))^{(j_1-1)r_1}}{N},\]
which is all we will use going forward.

Take $\xi_1$ to be the HSV box path that arises from $n$ loop surgeries on $\gamma$. At the maximum of each $\alpha_i$, we attach a (possibly degenerate) loop that travels in the direction $\nabla\alpha_i$ a distance $r_i$ to reach the half space, then performs all the motion in the $h_i$ direction, and then returns. Note that since $r_1\ge r_i$ for all other $i$, $\xi_1$ is $n(2r_1+1)$-roughly geodesic. Since $r_1>r$, the assumptions now apply to $r_1=r'$ and $C_1=n(2r_1+1)$.

We wish to apply Lemma \ref{ObtainingLongPerpendicularLength} to the curve $(\gamma^{H_1-j_1r_1}_{H_1-(j_1+1)r_1})_1$. To do this, note that since $a(1-\epsilon)>1$, 
\[\int_{(\gamma^{H_1-j_1r_1}_{H_1-(j_1+1)r_1})_1} \lVert\frac{dh_1(\gamma(t))}{dt}\rVert\, dt\ge \frac{Ca^{r_1}}{N},\]
which is at least $\sqrt{2}C_1$ by assumption (1).

Therefore, applying Lemma \ref{ObtainingLongPerpendicularLength} yields that 
\[\int_{(\gamma^{H_1-j_1r_1}_{H_1-(j_1+1)r_1})_1} \lVert \frac{d q(\gamma(t))}{dt}|_{{\nabla\alpha_1}^\perp}\rVert\,dt \ge
\frac{C^2a^{2r_1}(a(1-\epsilon))^{2(j_1-1)r_1}}{4C_1N^2L^2}-C_1-(n+1)r_1.\]

To obtain a surgery family for coordinate $\nabla\alpha_1$, we want to apply Lemma \ref{LongPerpendicularDistanceGivesSurgeryFamily}. 

By assumption (2), we calculate that

\begin{align*}
	C^2a^{2r_1} &\ge \frac{(2n-2)!!^2}{(2n-3)!!^2}4C_1N^2L^2 (32n^2r_1^2(n^2+1)+C_1+(n+1)r_1)\\
	&\ge 4C_1N^2L^2(32n^2r_1^2(n^2+1)+C_1+(n+1)r_1)\\
	\frac{C^2a^{2r_1}}{4C_1N^2L^2} & \ge 32n^2r_1^2(n^2+1)+C_1+(n+1)r_1\\
	&=(n^2+1)8n^2(2r_1)^2+C_1+(n+1)r_1.
\end{align*}

Applying assumption (3), \[\frac{C^2a^{2r_1}(a(1-\epsilon))^{2(j_1-1)r_1}}{4C_1N^2L^2} \ge (n^2+1)8n^2((2j_1)r_1)^2+j_1^2(C_1+(n+1)r_1).\]
Since $j_1^2>1$ and all other constants are positive, we can simplify to
\[ \frac{C^2a^{2r_1}(a(1-\epsilon))^{2(j_1-1)r_1}}{4C_1N^2L^2} \ge (n^2+1)8n^2((2j_1)r_1)^2+C_1+(n+1)r_1.\]
And since $2j_1\ge j_1+1$, it follows that
\[\frac{C^2a^{2r_1}(a(1-\epsilon))^{2(j_1-1)r_1}}{4C_1N^2L^2} \ge (n^2+1)8n^2((j_1+1)r_1)^2+C_1+(n+1)r_1.\]
We rearrange to determine that 
\[\int_{(\gamma^{H_1-j_1r_1}_{H_1-(j_1+1)r_1})_1} \lVert \frac{d q(\gamma(t))}{dt}|_{{\nabla\alpha_1}^\perp}\rVert\,dt \ge (n^2+1)8n^2((j_1+1)r)^2.\]

Therefore, we can apply Lemma \ref{LongPerpendicularDistanceGivesSurgeryFamily} to find a surgery family with $n^2$ options, each of which lengthens $q(\gamma)$ by no more than a length of $1$ and each of which attains $\nabla\alpha_1$-height of $H_1$.

\begin{figure} [h]
	
	\centering
	\includegraphics{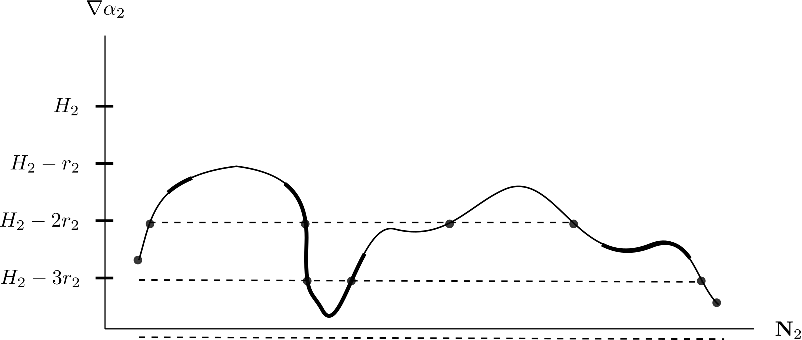}
	\caption{Determining surgeries for the curve $\xi_1$. The cross-section is the same as before, but bolded are surgery locations of options of the first surgery family. As long as there are enough options for each family, we can choose options that do not cover much of the desired slice of the curve. For instance, supposing that $j_2=1$, we could choose the rightmost surgery option.}
	\label{fig:SlicingCurve2}
	
\end{figure}

We will repeat this process $m-1$ more times by finding a sequence of curves $\xi_i$. Suppose we have already found $i$ path surgery families with $n^2$ options, lengthening $q(\gamma)$ by at most $1$ to visit the first $i$ half-spaces. We also have $n-i$ loop surgeries as before to make $q(\gamma)$ visit the other $n-i$ half-spaces. Using Proposition \ref{SimultaneousSurgery}, we choose an option from each of these $i$ families that do not conflict. We are guaranteed $2n-2$ options for the first interval $I_1$, $2n-4$ options for the second interval $I_2$, and so on. We therefore choose $I_1$ as shown in figure \ref{fig:SlicingCurve2}, so that 
\[\mathscr{L}_{i+1, H_{i+1}} ((\gamma_{H_{i+1}-(j_{i+1}+1)r_i}^{H_{i+1}-(j_{i+1})r_i})_{i+1}\cap I_1)\le \frac{\mathscr{L}_{i+1, H_{i+1}} ((\gamma_{H_{i+1}-(j_{i+1}+1)r_i}^{H_{i+1}-(j_{i+1})r_i})_{i+1}}{2n-2},\]
so that at least $ \frac{2n-3}{2n-2}\mathscr{L}_{i+1, H_{i+1}} ((\gamma_{H_{i+1}-(j_{i+1}+1)r_i}^{H_{i+1}-(j_{i+1})r_i})_{i+1}$ remains in the complement $\gamma_{H_{i+1}-(j_{i+1}+1)r_i}^{H_{i+1}-(j_{i+1})r_i})_{i+1}\setminus I_1$. Then choose the second option so that
\[\mathscr{L}_{i+1, H_{i+1}} ((\gamma_{H_{i+1}-(j_{i+1}+1)r_i}^{H_{i+1}-(j_{i+1})r_i})_{i+1}\cap I_2)\le \frac{1}{2n-4} \mathscr{L}_{i+1, H_{i+1}} ((\gamma_{H_{i+1}-(j_{i+1}+1)r_i}^{H_{i+1}-(j_{i+1})r_i})_{i+1}\setminus I_1),\]
so that 
\[ \mathscr{L}_{i+1, H_{i+1}} ((\gamma_{H_{i+1}-(j_{i+1}+1)r_i}^{H_{i+1}-(j_{i+1})r_i})_{i+1}\setminus(I_1\cup I_2))\ge\frac{(2n-3)(2n-5)}{(2n-2)(2n-4)}\mathscr{L}_{i+1, H_{i+1}} ((\gamma_{H_{i+1}-(j_{i+1}+1)r_i}^{H_{i+1}-(j_{i+1})r_i})_{i+1}\]
and so on. After $i$ steps, at least $\frac{(2n-3)!!}{(2n-2)!!}$ of the length must remain.

We thus obtain a curve $\xi_{i+1}$ which is $C_{i+1}=(n)(2r_i+1)$-coarsely geodesic, agreeing (up to a parameter shift) with $\gamma$ on a sub multicurve $\mathscr{C}_{i+1}=(\gamma_{H_{i+1}-(j_{i+1}+1)r_i}^{H_{i+1}-(j_{i+1})r_i})_{i+1}\setminus\bigsqcup I_{l} $ such that 
\[\mathscr{L}_{i+1, H_{i+1}} (\mathscr{C}_{i+1})\ge \frac{(1-\epsilon)^{(j_{i+1}-1)r_{i+1}}}{N}\frac{(2n-3)!!}{(2n-2)!!}.\]

Again by Corollary \ref{UniformEventualExponentialDistortion},

\[ \int_{\mathscr{C}_{i+1}} \lVert\frac{dh_{i+1}(\gamma(t))}{dt} \rVert\, dt\ge\frac{Ca^{r_{i+1}}(a(1-\epsilon))^{(j_{i+1}-1)r_{i+1}}}{N}\frac{(2n-3)!!}{(2n-2)!!}. \]

By assumption (1) this is at least $\sqrt{2}C_{i+1}$ so that we can apply Lemma \ref{ObtainingLongPerpendicularLength} to show that 
\[\int_{\mathscr{C}_{i+1}} \lVert\frac{d q(\gamma(t))}{dt}|_{\nabla\alpha_{i+1}^\perp}\rVert\,dt \ge \frac{C^2a^{2r_{i+1}}(a(1-\epsilon))^{2(j_{i+1}-1)r_{i+1}}}{4C_{i+1}N^2L^2}\frac{(2n-3)!!^2}{(2n-2)!!^2}-C_{i+1}-(n+1)r_{i+1}.\]
By assumption (2), 
\[\frac{C^2a^{2r_{i+1}}}{4C_{i+1}N^2L^2} \ge \frac{(2n-2)!!^2}{(2n-3)!!^2}(32n^2r_{i+1}^2(n^2+1)+C_{i+1}+(n+1)r_{i+1}),\]
and the argument concludes in the same way as the $i=1$ case that we can again apply Lemma \ref{LongPerpendicularDistanceGivesSurgeryFamily}.
\end{proof}

\section{Intrinsic rough isometries} \label{sec:IntrinsicRIs}

In this Section, we will prove Theorem \ref{IntrinsicRIsThmIntro}.

\begin{Def}

Let $G$ be a Lie group, and $f:G\to G$ be a quasi-isometry for some (and therefore all) left-invariant Riemannian metrics on $G$. Then $f$ is an \textit{intrinsic rough isometry} if $f$ is a rough isometry with respect to every left-invariant Riemannian metric. Note that the implied constant depends on the choice of metric.
\end{Def}

Because our theorem has the requirement that the metrics considered must split $G$ perpendicularly, we are not able to conclude that quasi-isometries are intrinsic rough isometires. We must also work modulo a conjecture about the structure of quasi-isometries of our groups.

\begin{conj}\label{GeneralizedPengQIs}
Let $G\cong \prod_{i=1}^n \mathbf{N}_i \rtimes \R^k$ be nondegenerate with $n\ge 2$. Let $q:G\to G$ be a quasi-isometry for some (hence any) left-invariant Riemannian metric, and let $\overline{\R^k}$ be any lift of the quotient map $G\to \R^k$. Give the $G$ semi-direct normal coordinates with the factor $\overline{\R^k}$ on the right. Then $q$ is at finite distance from a map $L_x\circ (\prod_{i=1}^n f_i)\circ \sigma$, where $L_x$ is a left-translation by a group element, each $f_i$ is a map from $\mathbf{N}_i$ to itself, and $\sigma$ is an element of a finite group of symmetries which simultaneously permute the $\mathbf{N}_i$, act on $\overline{\R^k}$ by a linear isomorphism which permutes the associated roots, such that if $\sigma^*\alpha_i=\alpha_j$, then $\sigma(\mathbf{N}_i)=\mathbf{N}_j$ and $(\mathbf{N}_i, D_i)$ is isomorphic to $(\mathbf{N}_j, D_j)$. 
\end{conj}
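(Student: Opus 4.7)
The plan is to extend Peng's program \cite{Peng1, Peng2} from the abelian-by-abelian case to the general nilpotent-by-abelian one, preserving the broad four-step shape of the known proof while replacing the Euclidean/classical quasiconformal steps with their sub-Riemannian analogs. As a preliminary reduction, by Theorem \ref{EigenvaluesAndDistancesIntro} and Corollary \ref{CloseLiftCosets} it suffices to fix any one left-invariant Riemannian metric $g$ on $G$ that splits $G$ perpendicularly, with splitting $s_g:\R^k\to G$, and analyze a QI $q:(G,d_g)\to (G,d_g)$ with respect to this metric. The Heintze quotients $\mathbf{H}_i=\mathbf{N}_i\rtimes\R$ of Section \ref{sec:GeometryOfSplittings}, and the horospherical foliations given by the level sets of $\alpha_i\circ\pi$, are the principal geometric objects.

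First I would run the Eskin--Fisher--Whyte coarse differentiation argument as generalized by Peng, along the abelian factor $s_g(\R^k)$. On a generic large scale box, this should produce a block-affine first-order approximation to $q$ that respects each horospherical foliation coarsely. The nondegeneracy hypothesis guarantees that no root is a non-negative combination of the others, so the approximation must carry each foliation to another; this permutation of the index set $\{1,\dots,n\}$, combined with the induced action on $s_g(\R^k)$, produces the finite symmetry $\sigma$ and forces the matching of roots described in the conjecture.

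Second, for each $i$ the preservation of horospherical foliations yields a boundary map $f_i:\mathbf{N}_i\to\mathbf{N}_{\sigma(i)}$ between the nilpotent parabolic factors at infinity of the $\mathbf{H}_i$ and $\mathbf{H}_{\sigma(i)}$. Standard Heintze-group rigidity (Xie, Le Donne--Xie, Shanmugalingam--Xie, and the sub-Riemannian Pansu differentiation theorem) should promote $f_i$ to a bi-Lipschitz map for the parabolic visual metric and identify it, up to left translation, with a graded Lie group isomorphism intertwining $D_i$ with $D_{\sigma(i)}$. This forces $(\mathbf{N}_i,D_i)\cong(\mathbf{N}_{\sigma(i)},D_{\sigma(i)})$. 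Assembling the $f_i$ with the action of $\sigma$ on $s_g(\R^k)$ and correcting by a single left translation $L_x$ recovers the conjectured form, with the difference $q - L_x\circ(\prod f_i)\circ\sigma$ bounded in the sup metric by a standard convergence argument on the approximations.

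The main obstacle is the first step. In the abelian-by-abelian setting, Peng's coarse differentiation exploits the fact that $\mathbf{N}_i=\R^{n_i}$: the individual Heintze groups $\mathbf{H}_i$ are quasi-isometric to real hyperbolic spaces (in the diagonalizable case) or to Heintze groups with Carnot boundaries of step one, where the theory of quasiconformal maps is classical. For general nilpotent $\mathbf{N}_i$, one must replace this with the sub-Riemannian Pansu theory, uniformly in the several distinct $\mathbf{N}_i$ that appear simultaneously, and control how the absolute-Jordan-form surgery of Section \ref{NonDiagonalizableDerivations} interacts with the coarse differentiation bookkeeping. Making this uniform in the presence of several non-isomorphic parabolic factors, so that the coarse approximation descends to a well-defined permutation of foliations rather than a scale-dependent one, is exactly where the argument currently falls short of a theorem; hence the status as Conjecture \ref{GeneralizedPengQIs}.
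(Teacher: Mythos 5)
There is nothing in the paper to compare your argument against: the statement you are addressing is Conjecture \ref{GeneralizedPengQIs}, which the paper deliberately leaves unproved. The author only records that Peng has established it when the $\mathbf{N}_i$ are abelian and $G$ is unimodular, that Eskin--Fisher--Whyte handle the non-unimodular rank-one case, and that one \emph{expects} these techniques to combine and extend to nilpotent factors; Theorem \ref{IntrinsicRIs} is then proved conditionally on the conjecture. Your proposal is essentially a restatement of that expected program (coarse differentiation along $s_g(\R^k)$, preservation of the horospherical foliations, boundary maps on the Heintze quotients $\mathbf{H}_i$), and, as you yourself say in your final paragraph, it stops exactly where the open content begins. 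So it is a plausible roadmap, not a proof, and it cannot be credited as one.

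To be concrete about where the gaps are: (i) Peng's coarse differentiation and the subsequent ``quadrilateral''/pattern-matching arguments use the abelian structure of the horospherical factors and the unimodularity of $G$ in an essential way (unimodularity is what makes the relevant measures and averaging arguments work in higher rank); removing either hypothesis is not a routine adaptation, and no step of your sketch supplies the replacement. (ii) Your second step invokes ``standard Heintze-group rigidity'' to promote the boundary maps $f_i$ to graded isomorphisms intertwining $D_i$ and $D_{\sigma(i)}$. Such rigidity is known only for restricted classes of Heintze groups; the general statement that a quasi-isometry of a Heintze group (equivalently, a quasisymmetric or bi-Lipschitz self-map of its parabolic visual boundary) is close to one induced by an automorphism is itself open, so this step assumes a second unproved rigidity theorem on top of the first. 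Note also that the conjecture as stated does not even require the $f_i$ to be isomorphisms --- they are merely factor maps --- so this part of your argument proves more than is needed while resting on more than is known. Since the paper treats the statement as a conjecture and you have not closed either gap, the honest conclusion is that your write-up documents the expected strategy (accurately, and in agreement with the paper's remarks) but does not constitute a proof.
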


This conjecture is expected to hold. Peng has proved the result under the additional assumptions that the $\mathbf{N}_i$ are abelian, and that the group $G$ is unimodular \cite{Peng1, Peng2}. In rank-1, the result is known due to Eskin--Fisher--Whyte without the unimodularity assumption, and in fact this case is easier than the unimodular case \cite{EFW1, EFW2}. So the conjecture is that one can combine Eskin--Fisher--Whyte's treatment of the non-unimodular case with Peng's work to remove the unimodular assumption, and that the work can be extended to nilpotent groups as well. See also Ferragut's proof of a similar result in the setting of horocyclic products \cite{Ferragut}.

The element $\sigma$ is denoted to lie in $Sym(G)$, the group of all possible permutations of roots and root spaces as above. The group $Sym(G)$ is typically trivial. For instance, any time the $(\mathbf{N}_i, D_i)$ are in pairwise distinct isomorphism classes, there is no hope for such an isomorphism.

We cannot show that every quasi-isometry is an intrinsic rough isometry at present. However, we can show that those with $\sigma$ trivial (whether or not $Sym(G)$ is trivial on the whole) are rough isometries for metrics that split $G$ perpendicularly.

\begin{thm} \label{IntrinsicRIs}

Assume Conjecture \ref{GeneralizedPengQIs} holds. Let $G\cong \prod_{i=1}^n \mathbf{N}_i \rtimes \R^k$ be nondegenerate, $q:G\to G$ be a quasi-isometry of $(G, d_g)$, where $g$ is a left-invariant Riemannian metric that splits $G$ perpendicularly. Decompose $q$, up to finite distance, into $L_x\circ (\prod_{i=1}^n f_i)\circ \sigma$, with respect to the lift $s_g(\R^k)$, and suppose $\sigma$ is trivial. Then $q$ is a rough isometry for $(G, d_g)$.
\end{thm}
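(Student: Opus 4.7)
\smallskip

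My plan is to use the box-geodesic description of $d_g$ and to exploit the fact that, with $\sigma$ trivial, $q$ preserves the $s_g(\R^k)$-coordinate exactly. First I would reduce to the case $q = \prod_{i=1}^n f_i$: the left translation $L_x$ is a $d_g$-isometry, and the finite-distance discrepancy in the conjectural decomposition contributes only a bounded additive constant to any comparison of distances, so neither obstructs the rough-isometry property. Under this reduction, for every pair of points $p,p' \in G$ written in $s_g$-split coordinates as $(h_p, \vec v_p)$ and $(h_{p'}, \vec v_{p'})$, the $\R^k$-component of $p^{-1}p'$ equals that of $q(p)^{-1}q(p')$ (both are $\vec v_{p'}-\vec v_p$); only the $\mathbf{N}_i$-components transform, from $h_i = e^{-\alpha_i(\vec v_p)D_i}(h_{p,i}^{-1} h_{p',i})$ to $\tilde h_i = e^{-\alpha_i(\vec v_p)D_i}(f_i(h_{p,i})^{-1} f_i(h_{p',i}))$.

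By Theorem \ref{BoxGeodesicsAreRoughGeodesics}, it suffices to compare $\rho_g(p,p')$ and $\rho_g(q(p), q(p'))$ up to an additive constant. A box geodesic consists of motion within cosets of $s_g(\R^k)$, interrupted by at most $n$ short segments inside cosets of the $\mathbf{N}_i$ that traverse the half-spaces $\alpha_i^{-1}([H_i,\infty))$. Since the $\R^k$-motion is preserved exactly by $q$, the only ingredients in the $\rho_g$-computation that can change are the half-space heights $H_i$. It therefore suffices to bound $|\tilde H_i - H_i|$ by a constant depending only on $g$.

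For this I would show that each $f_i$ descends to a quasi-isometry of the Heintze quotient $\mathbf{H}_i = \mathbf{N}_i \rtimes_{D_i} \R$, and that because $\sigma$ is trivial and the $\R^k$-coordinate is preserved, the induced quasi-isometry shifts the $\R$-coordinate by a uniformly bounded amount, hence fixes the boundary point at infinity. A standard consequence of the negative curvature of Heintze groups is that such quasi-isometries act on the horospherical $\mathbf{N}_i$ by bi-Lipschitz maps in the parabolic visual metric; equivalently, they change the height function $H_i(h) = \inf\{H : d_i(e^{-HD_i}h) \le 1\}$ by at most a bounded additive constant. Since conjugation by $e^{-\alpha_i(\vec v_p)D_i}$ acts on the $H_i$-function as a translation by $\alpha_i(\vec v_p)$, this bound is uniform over the base point. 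Summing the $n$ contributions and reassembling a box path yields $|\rho_g(p,p') - \rho_g(q(p), q(p'))| \le C$, which together with Theorem \ref{BoxGeodesicsAreRoughGeodesics} gives the required $|d_g(p,p') - d_g(q(p), q(p'))| \le C'$.

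The main obstacle is the height-comparison step: one must verify that the product decomposition supplied by Conjecture \ref{GeneralizedPengQIs} actually forces each $f_i$ to be bi-Lipschitz in the parabolic visual metric on $\mathbf{N}_i$ (rather than merely quasi-symmetric or quasi-isometric in a weaker sense), and that this bi-Lipschitz property is compatible with the conjugation by $e^{-\alpha_i(\vec v_p)D_i}$ that appears in the base-pointed formula for the $\mathbf{N}_i$-component. The triviality of $\sigma$ is precisely what rules out an additional rescaling of $\R^k$ that would otherwise promote $q$ to a genuine (nontrivial) rough similarity instead of a rough isometry; so its hypothesis is exactly what is required to close the gap and apply the bi-Lipschitz control uniformly across all base points.
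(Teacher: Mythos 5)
Your overall strategy (strip off $L_x$ and the finite-distance error, reduce to $q'=\prod_{i=1}^n f_i$, and exploit that trivial $\sigma$ leaves the $s_g(\R^k)$-coordinate untouched) is sound, but the step you yourself flag as the main obstacle is a genuine gap, and the justification you sketch for it is not a standard fact. A quasi-isometry of a Heintze group $\mathbf{H}_i$ fixing the point at infinity induces a quasi-symmetric (quasi-M\"obius) self-map of the parabolic visual boundary, \emph{not} in general a bi-Lipschitz one, and it need not change the height/Busemann function by a bounded additive amount: power-type quasi-symmetric boundary maps correspond to quasi-isometries that rescale heights multiplicatively. Conjecture \ref{GeneralizedPengQIs} as stated only tells you that each $f_i$ is some self-map of $\mathbf{N}_i$ with $\prod f_i$ a quasi-isometry of $G$, so the bound $|\tilde H_i-H_i|\le C$ on which your comparison of $\rho_g(p,p')$ with $\rho_g(q(p),q(p'))$ rests is unproven; your route would additionally need an argument that $f_i$ descends to a quasi-isometry of $\mathbf{H}_i$ in the first place (i.e.\ that cosets of $\mathbf{H}_i$ are undistorted in $G$). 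As written, the central estimate is missing.

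The paper closes the argument without any height comparison, and this is the idea your proposal lacks: take a $C_1$-rough HSV box geodesic $\gamma$ from $x$ to $y$ (Proposition \ref{NonDiagonalizableHSVProp}); it consists of at most $n+1$ segments in cosets of $s_g(\R^k)$ and at most $n$ segments of length at most $1$ in cosets of the $\mathbf{N}_i$. Because $q'$ fixes the $\R^k$-coordinate and the induced metric on every coset $h\,s_g(\R^k)$ is the left-translated copy of $s_g^*g$, the image under $q'$ of each $\R^k$-segment has exactly the same length; and since $q'$ is a $(K,C_3)$ quasi-isometry, the endpoints of each length-$\le 1$ segment in an $\mathbf{N}_i$-coset are mapped at most $K+C_3$ apart. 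Replacing those images by geodesics yields a path from $q'(x)$ to $q'(y)$ of length at most $\ell(\gamma)+n(K+C_3)$, whence $d_g(q(x),q(y))\le d_g(x,y)+C_1+2C_2+n(K+C_3)$, and the reverse inequality follows by running the same argument for a coarse inverse, whose decomposition again has trivial $\sigma$. No bi-Lipschitz control of the $f_i$ on parabolic boundaries, and no tracking of the half-space data of the image pair, is needed; the only inputs are the existence of uniform box geodesics and the product form of $q'$.
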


Of course, in the case where $G$ is abelian-by-abelian and unimodular, the Conjecture need not be assumed because it is already a theorem. So this result subsumes Theorem \ref{IntrinsicRIsThmIntro}.

\begin{proof}
By Proposition \ref{DiagonalizableHSVProp}, there is a $C_1$ depending on $g$ so that any two points $x$ and $y$ can be connected by a $C_1$-rough box geodesic. Let $C_2$ be the distance between $q$ and the map $L_x\circ (\prod_{i=1}^n f_i)$. Denote $q'=(\prod_{i=1}^n f_i)$, and suppose $q'$ is a $(K, C_3)$ quasi-isometry. 

$q'$ sends box paths to box paths, and does not change the length of any segment in a coset of $s_g(\R^k)$. Between any two points $x$ and $y$ there is a $C_1$-rough box geodesic $\gamma$ so that $\gamma$ is composed of at most $n+1$ (maximal) segments $\gamma_{2i}$ tangent to cosets of $s_g(\R^k)$, together with at most $n$ additional subsegments $\gamma_{2i+1}$, each of length at most $1$, where $i$ ranges from $0$ to $n$. After applying $q'$, the lengths of the $\gamma_{2i}$ do not change, and the endpoints of the $\gamma_{2i+1}$ may be at most $K+C_3$ apart. So the path between $q'(x)$ and $q'(y)$ that agrees with $q'(\gamma_{2i})$ and replaces $q'(\gamma_{2i+1})$ with a geodesic between its endpoints has length at most $\ell(\gamma)+n(K+C_3)$. Therefore,
\[d_g(q'(x), q'(y))\le d(x,y)+C_1+n(K+C_3).\]
Since $q$ is distance $C_2$ from a composition of $q'$ with an isometry, it follows that 
\[ d_g(q(x), q(y))\le d(x, y)+C_1+2C_2+n(K+C_3).\]
\end{proof}

On the other hand, some quasi-isometries with $\sigma$ nontrivial are rough isometries and some are not. For example, consider $G=\R^4\rtimes\R^2$, where we denote a basis for $\R^4$ $\{x_i: i\in [1, 4]\}$ and a $s_g^* g$-orthonormal basis for $\R^2$ $\{e_1, e_2\}$. Suppose the roots $\alpha_1$ and $\alpha_3$ are $\pm e_1$, and the the roots $\alpha_2$ and $\alpha_4$ are $\pm 2e_2$. Choose a Riemannian metric $g$ so that this basis is orthonormal. We consider two maps whose representations as product maps of the form described in Conjecture \ref{GeneralizedPengQIs} is just an element of the symmetric group (i.e. $L_{Id}\circ \prod_{i=1}^n Id\circ \sigma$ where the map $\sigma$ is not the identity). The map $\sigma_1:G\to G$ defined on the $\R^4$-coordinates by $\begin{bmatrix} 0 & 1 & 0 & 0 \\ 1 & 0 & 0 & 0 \\ 0 & 0 & 0 & 1\\ 0 & 0 & 1 & 0    
\end{bmatrix}$, and on the $\R^2$-coordinates as $\begin{bmatrix}
0 & \frac{1}{2}\\ 2 & 0
\end{bmatrix}$ is a quasi-isometry, but it does not induce the identity on $g|_{\R^2}$. The proof of Theorem \ref{IntrinsicRIs} does not go through for $\sigma_1$, and indeed $\sigma_1$ is not a rough isometry for the metric $d_g$ because it fails to induce a rough isometry on the totally-geodesic copy of $\R^2$. On the other hand, the map $\sigma_2:G\to G$ defined on $\R^4$ coordinates by $\begin{bmatrix} 0 & 0 & 1 & 0 \\ 0 & 0 & 0 & 1 \\ 1 & 0 & 0 & 0 \\ 0 & 1 & 0 & 0\end{bmatrix}$, and on $\R^{2}$-coordinates as $\begin{bmatrix} - 1 & 0 \\ 0 & -1\end{bmatrix}$. Then $\sigma_2$ is an isometry, and \textit{a fortiori} a rough isometry, for $d_g$.

\bibliographystyle{plain}
\bibliography{Left-invariant_Riemannian_distances_on_higher-rank_Sol-type_groups.bib}

\end{document}